\documentclass[preprint,12pt]{elsarticle}

\usepackage{amssymb}
\usepackage{amsmath}

\usepackage{graphicx}
\usepackage{subcaption}	
\usepackage{multirow}
\usepackage{amsfonts}
\usepackage{amsthm}
\usepackage{mathrsfs}
\usepackage{xcolor}
\usepackage{textcomp}
\usepackage{manyfoot}
\usepackage{booktabs}
\usepackage{algorithm}
\usepackage{algorithmicx}
\usepackage{algpseudocode}
\usepackage{listings}
\usepackage{enumitem}

\newtheorem{theorem}{Theorem}
\newtheorem{proposition}[theorem]{Proposition}
\newtheorem{lemma}[theorem]{Lemma}

\theoremstyle{definition}
\newtheorem{definition}[theorem]{Definition}

\theoremstyle{remark}
\newtheorem{remark}[theorem]{Remark}

\theoremstyle{plain}
\newtheorem{corollary}[theorem]{Corollary}

\newcommand{\N}{\mathbb{N}}

\newcommand{\R}{\mathbb{R}}

\newcommand{\U}{\mathbb{U}}

\newcommand{\G}{\mathcal{G}}

\newcommand{\UU}{\mathcal{U}}

\newcommand{\ZZ}{\mathcal{Z}}
\newcommand{\A}{\mathcal{A}}
\newcommand{\AB}{\partial \mathcal{A}}
\newcommand{\AM}{[\partial \A]_-}
\newcommand{\AO}{[\partial \A]_0}

\newcommand{\AP}{A_{\mathcal{P}}}
\newcommand{\bP}{b_{\mathcal{P}}}

\journal{IFAC Journal of Systems and Control}

\begin{document}

\begin{frontmatter}



\title{Polytopic Inner Approximation of Admissible Sets for Linear Systems}

 \author[label1]{Jean L\'{e}vine}
 \author[label2]{Philipp Rumschinski}
 \author[label3]{Franz Ru{\ss}wurm}
 \author[label3]{Stefan Streif\corref{cor1}}
 
 \cortext[cor1]{Corresponding author. Email: stefan.streif@etit.tu-chemnitz.de}

 \affiliation[label1]{organization={Unit\'{e} Maths et Syst\`{e}mes, MINES Paris - PSL University},
             addressline={60 Bd Saint-Michel},
             city={Paris},
             postcode={75272},
             country={France}}
             
\affiliation[label2]{organization={Faculty of Engineering and Technology, Hochschule Furtwangen},
             addressline={Robert-Gerwig-Platz 1},
             city={Furtwangen},
             postcode={78120},
             country={Germany}}
    
\affiliation[label3]{organization={Automatic Control and System Dynamics, Chemnitz University of Technology},
             addressline={Str. der Nationen 62},
             city={Chemnitz},
             postcode={09111},
             country={Germany}}

%

\begin{abstract}
This paper presents a method for computing inner polytopic approximations of admissible sets for continuous-time linear control systems subject to multiple affine state constraints, with a particular concern on computational tractability for large dimensional problems.
In place of globally computing the admissible set and the part of its boundary called the barrier, we compute the so-called individual  admissible sets and the corresponding barriers for each single constraint.
We then use the exact sampling of linear systems and generate polytopes in half-space representation that provide an approximation of the individual admissible sets, to finally intersect them.
We provide a complexity analysis of the whole procedure to evaluate its efficiency.
The approach is illustrated by two examples —a triple integrator and a mass–spring–damper chain considered in 4, 6, 8, and 10 dimensions— with corresponding runtimes evaluated for both.

%
%
\end{abstract}



\begin{keyword}
linear system \sep affine state constraints \sep continuous-time system \sep admissible set \sep barrier theory \sep convex sets



\end{keyword}

\end{frontmatter}



\section{Introduction}\label{Sec:Introduction}
Admissible sets play a central role in constrained control, since they describe the set of initial conditions for which the state constraints can be satisfied for all future times under a suitable input. Hence, their computation are of interest in constrained stabilization, safety verification, and optimization-based control.
However, if the dimension of the system increases, the construction of admissible sets becomes computationally challenging.

In literature, several classes of methods have been developed to approximate such sets. Grid-based viability and reachability methods, including Hamilton-Jacobi methods, typically rely on discretization of the state space and the computation of an implicit representation of the set on this grid \cite{Mitchell_2005, Gillula_2014, Chen_2017}. While these methods are powerful and broadly applicable, their computational cost increases rapidly with the state dimension. Moreover, refinement usually requires recomputation on a finer grid, and the resulting grid-based representation is not always directly suitable for constrained control or optimization unless combined with a set description as proposed in \cite{Gillula_2014}.

Another important class of approaches is based on set propagation and reachability analysis. For instance, reachable sets may be propagated through time with help of support functions \cite{Le_Guernic_2009}, or through the construction of ellipsoidal inner approximations \cite{Kurzhanski_2000}. Such methods provide rigorous geometric approximations, but they require the repeated propagation of sets or geometric objects whose complexity depends strongly on the chosen set representation. Similarly, zonotope-based approaches propagate or scale structured set representations to obtain invariant or reachable-set approximations \cite{Mitchell_2019, Wetzlinger_2025}. Hereby, the geometry of zonotopic approximations is determined by the selected generators and the central symmetry property of zonotopes, which might limit their ability to approximate non-symmetric admissible set boundaries.

An alternative viewpoint is provided by barrier theory, a particular case of viability kernels, see \cite{Aubin_1991}.
One of the specificities of barrier theory is that it provides a characterization of the boundary of the admissible set based on trajectories satisfying necessary conditions similar to Pontryagin's maximum principle \cite{Levine_2013}. Accordingly, the method focuses on the relevant part of the state space, namely the admissible set boundary.

In order to reduce the curse of dimensionality when the number of constraints is high, we propose in this paper to decompose the global computation of the admissible set by an individual computation of admissible sets associated to each individual constraint.
This will be done in the framework of linear systems subject to affine state constraints to concur with the aim of decreasing the complexity.


This paper combines the geometric boundary characterization provided by barrier theory with a classical use of exact sampling available in this context.
Instead of integrating individual barrier trajectories, the proposed sampling algorithm propagates the set of boundary trajectories together by a suitable parameterization.
The resulting samples are converted into polytopic inner approximations of the admissible set in half-space representation, thanks to the convexity property of the individual and global admissible set.
%
Finally, runtime complexity analyses are derived for both the structured sampling and polytope reconstruction algorithms.

The remainder of the paper is organized as follows. Section~\ref{Sec:Barrier_Theory_for_Linear_Control_Systems} formulates the admissible set problem, specializes the barrier-theoretic characterization to constrained linear systems and establishes structural properties of the resulting admissible sets, and we mention a possible extension to Hammerstein systems.
Section~\ref{Sec:Structured_Barrier_Sampling} develops the proposed structured sampling algorithm and derives its computational complexity.
Section~\ref{Sec:Construction_Inner_Approximation} presents the construction of polytopic inner approximations in half-space representation from the obtained samples together with the corresponding complexity analysis. 
Numerical results for high-dimensional constrained systems are presented in Section~\ref{Sec:Examples}, illustrating the proposed algorithms and the computational benefit of exploiting the barrier-induced boundary structure.
Finally, Section~\ref{Sec:Conclusion} concludes the paper.

\section{Barrier Theory for Linear Control Systems}\label{Sec:Barrier_Theory_for_Linear_Control_Systems}

We consider a linear control system of the form
\begin{align}\label{eq:Linear_System}
\begin{split}
\dot{x}(t) &= A x(t) + B u(t) \\
x(t_0) &= x^0
\end{split}
\end{align}
with the state $x(t) \in \R^n$, initial value $x^0 \in \R^n$ and matrices $A \in \R^{n \times n}$ and $B \in \R^{n \times m}$.
We assume the set $\U \subset \R^m$ is nonempty, convex and compact.
Let $\UU$ be the set of Lebesgue-measurable functions $u:[t_0,\infty) \rightarrow \U$.
We denote by $x^{(x_0,u)}$, or $x^{u}$ if unambiguous, the absolutely continuous maximal integral curve that satisfies \eqref{eq:Linear_System} with initial value $x_0 \in \R^n$ and generated by the control law $u \in \UU$.

In addition, we consider affine state constraints of the form
\begin{align}\label{eq:Linear_Constraints}
H x(t) + h \preceq 0 \quad \forall \, t \in [t_0,\infty)
\end{align}
with $H \in \R^{p \times n}$ and $h \in \R^p$ for some $p \in \N$, where the sign $a \preceq b$ means $a_i \leq b_i$ for all $i = 1,2\ldots,p$.

Moreover, we define the $i$-th individual state constraint as
\begin{align}\label{eq:Single_Linear_Constraint}
H_i x(t) + h_i \leq 0  \quad \forall \, t \in [t_0,\infty),
\end{align}
where $H_i \in \R^{1 \times n}$ the $i$-th row of the matrix $H$ and by $h_i$ the $i$-th component of the vector $h$.
We indeed assume that for all $i \in \{1,2,\ldots,p\}$, $H_i$ is nonzero, i.e., $H_i \neq 0 \in \R^n$.

We define the constraint set $\G$ as $\G \triangleq \{x \in \R^n \, \vert \, Hx + h \preceq 0 \}$.
The set $\G$ can be split into its interior $\G_- \triangleq \{x \in \R^n \, \vert \, Hx + h \prec 0 \}$ and its boundary $\G_0 \triangleq \{x \in \R^n \, \vert \, \exists \, i \in \{1,2,\ldots,p \}: H_i x + h_i = 0 \}$.
We assume $\G$ to have a nonempty interior and $\G_0$ to be nonempty.
It holds that $\G = \G_0 \: \cup \: \G_-$.

We denote by $\G_i \triangleq \{x \in \R^n \, \vert \, H_i x + h_i \leq 0 \}$ the \emph{individual constraint set} associated with the $i$-th individual state constraint, with analogous definitions of its interior $[\G_i]_-$ and its boundary $[\G_i]_0$.

\begin{remark}
	The assumptions introduced in \cite{Levine_2013} in the nonlinear case are automatically satisfied for linear systems.
	Their straightforward verification is left to the reader.
\end{remark}

We recall from \cite{Levine_2013} the definition of the admissible set
\begin{align*}
\A \triangleq \{ \bar{x} \in \G \, \vert \, \exists  u \in \UU : x^{(\bar{x},u)}(t) \in \G \; \; \forall t \geq 0 \},
\end{align*}
which is proved to be closed and containing its boundary $\AB$.
This boundary can be split into the usable part $\AO = \partial \A \cap \G_0$ and the barrier $\AM = \partial \A \cap \G_-$, in accordance with \cite{Isaacs_1965}.

We now consider the so-called \emph{individual admissible set} associated with each individual constraint $i\in\{1,2,\ldots,p\}$
\begin{align*}
\A_i \triangleq \{ \bar{x} \in \G_i \, \vert \, \exists  u \in \UU : x^{(\bar{x},u)}(t) \in \mathcal{G}_i \; \; \forall t \geq 0 \}.
\end{align*}
The corresponding boundary components $[\partial \A_i]_-$ and $[\partial \A_i]_0$ are defined analogously as above.

\subsection{Barrier-Theoretic Characterization of the Admissible Sets and Individual Admissible Sets}\label{Sec:Barrier_Theoretic_Characterization_of_the_Admissible_Set}
We recall the barrier-theoretic characterization of admissible sets introduced in \cite{Levine_2013} for the linear system \eqref{eq:Linear_System} subject to the affine state constraints \eqref{eq:Linear_Constraints}, with the assumptions:

\newpage

\begin{enumerate}[before=\textit{Assumptions \vspace{-0.5em}}, label=(A\arabic*)]
    \item There exists an $\bar{x} \in \A$ and a $u\in \UU$ such that
    \begin{align*}
        \sup_{t \in [0,\infty)} \max_{i = 1,2,\ldots,p} \left( H_i x^{(\bar{x},u)}(t) + h_i \right) = 0.
    \end{align*}
    \item For every $i \in \{1,2,\ldots,p\}$,
    \begin{align*}
        \dim \left( \ker \left\lbrace \begin{pmatrix}
			H_i \\ H_i A
		\end{pmatrix} \right\rbrace \right) = n-2.
    \end{align*}
\end{enumerate}

\begin{remark}
Assumption (A1) guarantees that the admissible set is nonempty and intersects the constraint boundary $\mathcal{G}_0$, while (A2) ensures that the tangentiality condition is nondegenerated.
\end{remark}

Let
\begin{align*}
        \mathbb{I}(x) \triangleq \left\lbrace i \in \{1,2,\ldots,p \} \; \vert \; H_i x + h_i = 0 \right\rbrace . 
\end{align*}
be the set of active indices at a point $x \in \R^n$.

By Theorem~7.1 of \cite{Levine_2013}, a trajectory $x^{u}$ belongs to the barrier $[\partial \mathcal{A}]_-$ if it evolves along $\partial \mathcal{A}$ and satisfies the adjoint equation \eqref{eq:General_Adjoint_Equation} and the Hamiltonian condition \eqref{eq:General_Cond_Hamiltonian}, with terminal conditions determined by the ultimate tangentiality condition \eqref{eq:General_Condition_ultimate_tangentiality}:
\begin{align}\label{eq:General_Adjoint_Equation}
\dot{\lambda}(t) = -A^\top \lambda(t),
\quad
\lambda(0) = H_{i^*}^\top,
\end{align}
where $i^* \in \mathbb{I}(\zeta)$ such that

\begin{align}\label{eq:General_Condition_ultimate_tangentiality}
    \min_{u\in \mathbb{U}} \max_{i \in \mathbb{I}(\zeta)} H_i (A \zeta + B u) =  H_{i^*} (A \zeta + B \bar{u}) = 0
\end{align}

and 

\begin{align}
    \begin{split}\label{eq:General_Cond_Hamiltonian}
        \min_{u \in \mathbb{U}} \, \left\lbrace (  \lambda^{u}(t))^\top \left(A x^{u}(t) + Bu \right) \right\rbrace = 0.
    \end{split}
\end{align}

%

\subsection{The Individual Admissible Sets}\label{Sec:Individual_Admissible_Sets}
For the purpose of computing equations \eqref{eq:General_Adjoint_Equation}, \eqref{eq:General_Condition_ultimate_tangentiality} and \eqref{eq:General_Cond_Hamiltonian} in an efficient way, we introduce the following characterization of the individual admissible sets and their boundary.

The barrier-theoretic characterization from the previous subsection can be applied directly to systems subject to multiple state constraints by considering all active constraints simultaneously. 
However, in this setting, barrier trajectories associated with different constraints may intersect, leading to so-called stopping points \cite{Esterhuizen_2014}, where the backward integration must be terminated. 
Determining such stopping points is, in general, nontrivial and depends strongly on the underlying system dynamics.

To avoid this difficulty, we instead consider the admissible sets associated with the individual state constraints separately and construct the admissible set corresponding to multiple constraints through set intersections.

As a straightforward application of Theorem~7.1 of \cite{Levine_2013}, the characterization for an individual state constraint $i \in \{1,2,\ldots p\}$ is that the barrier $[\partial \mathcal{A}_i]_-$ consists of trajectories $x^{u}$ that evolve along $\partial \mathcal{A}_i$ satisfying \eqref{eq:Adjoint_Equation} with $\lambda_i \neq 0$ and \eqref{eq:Cond_Hamiltonian} with final conditions given by the ultimate tangentiality condition \eqref{eq:Condition_ultimate_tangentiality}:

\begin{align}\label{eq:Adjoint_Equation}
\dot{\lambda}_i(t) = -A^\top \lambda_i(t),
\quad
\lambda_i(0) = H_{i}^\top,
\end{align}

\begin{align}\label{eq:Condition_ultimate_tangentiality}
    \min_{u\in \mathbb{U}} H_i (A \zeta_i + B u) = 0
\end{align}

\begin{align}
    \begin{split}\label{eq:Cond_Hamiltonian}
        \min_{u \in \mathbb{U}} \, \left\lbrace (  \lambda_i(t))^\top \left(A x^{u}(t) + Bu \right) \right\rbrace = 0.
    \end{split}
\end{align}

The barrier control law $\bar{u}$ is obtained from condition \eqref{eq:Cond_Hamiltonian}.
The explicit solution of the adjoint system \eqref{eq:Adjoint_Equation} reads
\begin{align*}
\lambda_i(t) = e^{-A^\top t} H_i^\top,
\end{align*}
where, without loss of generality, we have set $\bar t=0$.
Substituting this explicit solution into \eqref{eq:Cond_Hamiltonian}, we obtain
\begin{align*}
0 &= \min_{u \in \U} \left\lbrace \lambda_i(t)^\top A x^{\bar{u}}(t) + \lambda_i(t)^\top B u \right\rbrace \\
&= \lambda_i(t)^\top A x^{\bar{u}}(t) +  \min_{u \in \U} \left\lbrace \lambda_i(t)^\top B u \right\rbrace \\
&= \left( A^\top e^{-A^\top t} H_i^\top \right)^\top x^{\bar{u}}(t) +  \min_{u \in \U} \left\lbrace \left( B^\top e^{-A^\top t} H_i^\top \right)^\top u \right\rbrace.
\end{align*}
Thus, the barrier control law satisfies
\begin{align}\label{eq:Barrier_Control}
\bar{u}(t) = \underset{u \in \U}{\textnormal{argmin}} \, \sigma_i(t)^\top u
\end{align}
with the switching function
\begin{align}\label{eq:Switching_Function}
\sigma_i(t) \triangleq B^\top e^{-A^\top t} H_i^\top .
\end{align}
Remark the switching function is independent of $x$.

We use the the final condition \eqref{eq:Condition_ultimate_tangentiality} to obtain the set of ultimate tangentiality points
\begin{align}\label{eq:Def_UT-set}
\ZZ_i \triangleq \left\{ \zeta \in \R^n \, \vert \, H_i \zeta = -h_i, \, H_i A \zeta = -\min_{u\in \U} \, H_i B u \right\}.
\end{align}


Consequently, trajectories on $[\partial \A_i]_-$ are obtained by integrating the system \eqref{eq:Linear_System} and the adjoint system \eqref{eq:Adjoint_Equation} backward in time from points $\zeta_i \in \mathcal{Z}_i$ under the barrier control law $\bar{u}$.

Let us denote by $\mathcal{B}_i$ the set of individual barrier trajectories, i.e., trajectories $x^{\bar{u}}(t)$ satisfying \eqref{eq:Linear_System} and \eqref{eq:Adjoint_Equation}-\eqref{eq:Def_UT-set} for $i = 1,2,\ldots,p$.
Then, we obtain the following lemma.

\begin{lemma}\label{Lem:Representation_Zi}
Let $\zeta_i$ be a particular solution of \eqref{eq:Def_UT-set}.
Then, $\ZZ_i = \{ \zeta_i \} + \mathcal{L}_i$, where we have denoted \begin{align*}
\mathcal{L}_i \triangleq \ker \left\lbrace \begin{pmatrix}
H_i \\ H_i A
\end{pmatrix} \right\rbrace,
\end{align*}
which is $(n-2)$-dimensional by Assumption (A2).
Then,
\begin{align*}
\mathcal{B}_i = \ZZ_i(t) = \{\zeta_i(t)\} + \mathcal{L}_i(t)
\end{align*}
with $\dot{\zeta}_i(t) = A \zeta_i(t) + B \bar{u}(t)$ and $ \mathcal{L}_i(t) = e^{At} \mathcal{L}_i$.
\end{lemma}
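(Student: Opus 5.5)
The first claim is pure linear algebra. The set $\ZZ_i$ in \eqref{eq:Def_UT-set} is the solution set of the linear system
\begin{align*}
\begin{pmatrix} H_i \\ H_i A \end{pmatrix} \zeta = \begin{pmatrix} -h_i \\ -\min_{u\in\U} H_i B u \end{pmatrix}.
\end{align*}
The right-hand side is a fixed vector: the minimum exists because $\U$ is compact and $u\mapsto H_iBu$ is continuous. By (A2) the coefficient matrix has rank $2$, so it is onto $\R^2$ and a particular solution $\zeta_i$ exists. The standard affine-solution-set argument then applies. If $\zeta\in\ZZ_i$, then $\zeta-\zeta_i$ lies in the kernel $\mathcal{L}_i$. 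Conversely, $\zeta_i+\ell$ solves the system for every $\ell\in\mathcal{L}_i$. Rank–nullity gives $\dim\mathcal{L}_i=n-2$.

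For the second claim, I would first use the remark after \eqref{eq:Switching_Function}: $\sigma_i$, and hence the barrier control $\bar u(t)$ from \eqref{eq:Barrier_Control}, depends only on $t$ and not on the initial point $\zeta\in\ZZ_i$. Where the argmin is not unique, I would fix one measurable selection; when $\sigma_i(t)\neq 0$ the minimizer is generically unique, so this only matters on a null set. Hence every barrier trajectory in $\mathcal{B}_i$, integrated (backward) from some $\zeta\in\ZZ_i$, is driven by the same input $\bar u$. Writing $\zeta=\zeta_i+\ell$ and applying the variation-of-constants formula gives
\begin{align*}
x^{(\zeta,\bar u)}(t) = e^{At}\zeta_i + \int_0^t e^{A(t-s)}B\bar u(s)\,ds + e^{At}\ell = \zeta_i(t) + e^{At}\ell .
\end{align*}
Here $\zeta_i(t)$ solves $\dot\zeta_i=A\zeta_i+B\bar u$ with $\zeta_i(0)=\zeta_i$. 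This is the superposition principle for linear systems. As $\ell$ ranges over $\mathcal{L}_i$, we obtain $\ZZ_i(t)=\{\zeta_i(t)\}+e^{At}\mathcal{L}_i$.

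Since $\mathcal{B}_i$ was defined as the collection of trajectories issued backward from $\ZZ_i$, the sets at each time coincide, giving $\mathcal{B}_i=\ZZ_i(t)$. The identification is time-slice-wise, with $t\le 0$ for backward integration. Because $e^{At}$ is invertible, $\mathcal{L}_i(t)$ stays $(n-2)$-dimensional.

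The main obstacle is making precise that $\bar u$ is common to all starting points. This rests on the adjoint initial condition $\lambda_i(0)=H_i^\top$ in \eqref{eq:Adjoint_Equation} being independent of $\zeta$; I would stress that point and handle the possible non-uniqueness of the argmin through the measurable selection above.
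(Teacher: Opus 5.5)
Your proof is correct and follows essentially the paper's route. The paper checks by differentiation that $\zeta_i(t)+e^{At}\alpha$ solves the system under the common barrier control $\bar u$, concludes by uniqueness of solutions and adds the converse; this is the same superposition argument you get from the variation-of-constants formula, and you also spell out the affine structure $\ZZ_i=\{\zeta_i\}+\mathcal{L}_i$ from (A2), which the paper takes for granted. One small caveat: your aside that non-uniqueness of the argmin ``only matters on a null set'' is not true in general (e.g.\ a box $\U\subset\R^m$, $m\ge 2$, with a component of $\sigma_i$ vanishing identically), but the step is not needed, since the lemma, like the paper, works with one fixed $\bar u$.
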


\begin{proof}
Take $\alpha \in \mathcal{L}_i$ and let $x(0) = \zeta_i(0) + \alpha$.
Then, we have $x(0) \in \ZZ_i$ and
\begin{align*}
\dot{x}(t) &= \dot{\zeta}_i(t) + \frac{d}{dt} e^{At} \alpha \\
&= \dot{\zeta}_i(t) + A e^{At} \alpha \\
&= A \zeta_i(t) + B \bar{u}(t) + A e^{At} \alpha \\
&= A \left(\zeta_i(t) + e^{At} \alpha \right) + B \bar{u}(t) \\
&= A x(t)+ B \bar{u}(t).
\end{align*}
Hence, $x(t)$ satisfies \eqref{eq:Linear_System} with the initial condition $x(0)=\zeta_i(0)+\alpha$.
By uniqueness of solutions,
\begin{align*}
x(t)=\zeta_i(t)+e^{At}\alpha.
\end{align*}
Therefore, every trajectory generated from an initial point in $\ZZ_i$ belongs to
\begin{align*}
\{\zeta_i(t)\} + e^{At} \mathcal{L}_i.
\end{align*}

Conversely, for every $\alpha \in \mathcal{L}_i$, the trajectory
\begin{align*}
x(t)=\zeta_i(t)+e^{At}\alpha
\end{align*}
satisfies \eqref{eq:Linear_System} under the barrier control law $\bar{u}$ and has initial condition $x(0)=\zeta_i(0)+\alpha \in \ZZ_i$.
Thus, every point in ${\zeta_i(t)}+e^{At}\mathcal{L}_i$ is generated by a barrier trajectory originating from $\ZZ_i$.
The lemma is proven.
\end{proof}

\begin{remark}\label{Rem:UT_points_n-2_dimensional}
The set $\mathcal{Z}_i$ is, in general, an $(n-2)$-dimensional manifold, since it is defined by two independent scalar conditions.
Backward integration from $\mathcal{Z}_i$ under the barrier control law adds the integration time as one degree of freedom.
Consequently, the barrier $[\partial\mathcal A_i]_-$ is locally an $(n-1)$-dimensional manifold.
\end{remark}

\subsection{Convexity of Individual Admissible Sets and the Global One}\label{Sec:Structural_Properties_of_the_Admissible_Set}
The following result shows that the admissible sets associated with the individual state constraints and the global one are convex.
These results will play an important role in Section \ref{Sec:Construction_Inner_Approximation}, in order to generate the whole admissible set from its boundary, using the fact that convex combinations of boundary samples remain inside the corresponding admissible set, and allowing to generate polytopic approximations in half-space form based on the barrier points. 

\begin{proposition}\label{Prop:Ai_convex}
For all $i \in \{1,2,\ldots,p \}$, the individual admissible set $\A_i$ of system \eqref{eq:Linear_System} subject to an affine state constraint of the form \eqref{eq:Single_Linear_Constraint} is convex.
\end{proposition}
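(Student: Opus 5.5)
The plan is a direct convexity argument based on linearity of the dynamics and convexity of $\U$ and $\G_i$; the barrier characterization is not needed. Take $\bar x^1, \bar x^2 \in \A_i$ and $\theta \in [0,1]$. By definition of $\A_i$ there exist $u^1, u^2 \in \UU$ such that
\begin{align*}
H_i x^{(\bar x^j,u^j)}(t) + h_i \leq 0 \quad \forall t \geq 0,\ j=1,2 .
\end{align*}
The goal is to exhibit, for $\bar x^\theta \triangleq \theta \bar x^1 + (1-\theta)\bar x^2$, an admissible control keeping the trajectory in $\G_i$.

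The candidate control is $u^\theta \triangleq \theta u^1 + (1-\theta) u^2$. First I will check that $u^\theta \in \UU$. For every $t$, $u^\theta(t) \in \U$ because $\U$ is convex. Moreover, $u^\theta$ is Lebesgue measurable as a linear combination of measurable functions.

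Next I will use the variation of constants formula,
\begin{align*}
x^{(\bar x,u)}(t) = e^{At}\bar x + \int_0^t e^{A(t-s)} B u(s)\, ds ,
\end{align*}
which holds globally because the system is linear with bounded input, so solutions are defined on $[0,\infty)$. The right-hand side is jointly affine (indeed linear) in the pair $(\bar x, u)$. Therefore
\begin{align*}
x^{(\bar x^\theta,u^\theta)}(t) = \theta x^{(\bar x^1,u^1)}(t) + (1-\theta) x^{(\bar x^2,u^2)}(t) \quad \text{for all } t\ge 0 .
\end{align*}
Applying the affine map $x \mapsto H_i x + h_i$ gives
\begin{align*}
H_i x^{(\bar x^\theta,u^\theta)}(t) + h_i = \theta\big(H_i x^{(\bar x^1,u^1)}(t)+h_i\big) + (1-\theta)\big(H_i x^{(\bar x^2,u^2)}(t)+h_i\big) \leq 0 .
\end{align*}
The same computation at $t=0$ gives $\bar x^\theta \in \G_i$. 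Hence $\bar x^\theta \in \A_i$, so $\A_i$ is convex.

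No step is genuinely hard. The only point needing care is that the convex combination of two controls is again an admissible control, i.e. measurable and $\U$-valued, which follows from convexity of $\U$. The same argument applies verbatim to $\G$ in place of $\G_i$, since $\G$ is an intersection of half-spaces and hence convex; this yields convexity of $\A$ as well.
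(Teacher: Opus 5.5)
Your proof is correct and follows the same route as the paper's: you take a convex combination of the initial states and of the controls, use linearity of the variation-of-constants formula and convexity of $\U$ to keep $\theta u^1+(1-\theta)u^2 \in \UU$, and conclude from the affine form of the constraint; you are also slightly more careful, checking measurability and $\bar x^\theta \in \G_i$ explicitly. Your closing remark that the same argument applied directly to $\G$ gives convexity of $\A$ is sound, and it is actually preferable to the paper's derivation via $\A=\bigcap_{i=1}^p \A_i$, because in general only the inclusion $\A \subseteq \bigcap_{i=1}^p \A_i$ holds (a point lying in every $\A_i$ may require different controls for different constraints).
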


\begin{proof}
Let $x^0_1,x^0_2 \in \A_i$ with corresponding admissible control inputs $u_1, u_2 \in \UU$ such that $x^{(x^0_1,u_1)}(t), x^{(x^0_2,u_2)}(t) \in \A_i$ for all $t \geq 0$, and let $\rho \in [0,1]$.

Since the system is linear,
\begin{align*}
& \quad \; x^{(\rho x^0_1 + (1-\rho) x^0_2, \rho u_1 + (1-\rho) u_2)}(t) \\ 
&= e^{At}(\rho x^0_1 + (1-\rho) x^0_2) + \int_0^t \, e^{A(t-s)}B(\rho u_1(s) + (1-\rho) u_2(s)) \, ds \\
&= \rho \left( e^{At} x^0_1 + \int_0^t \, e^{A(t-s)}Bu_1(s) \, ds \right) + (1-\rho) \left( e^{At}x^0_2 + \int_0^t \, e^{A(t-s)}Bu_2(s) \, ds \right) \\
&=\rho x^{(x^0_1,u_1)}(t) + (1-\rho)x^{(x^0_2,u_2)}(t).
\end{align*}
Moreover, the convexity of $\U$ implies that
\begin{align*}
\rho u_1 + (1-\rho)u_2 \in \UU.
\end{align*}
Thus, we obtain
\begin{align*}
& \quad \; H_i \left( \rho x^{(x^0_1,u_1)}(t) + (1-\rho)x^{(x^0_2,u_2)}(t) \right) + h_i \\
&= \rho \left( H_i x^{(x^0_1,u_1)}(t) + h_i \right) + (1-\rho) \left( H_i x^{(x^0_2,u_2)}(t) + h_i \right) \leq 0
\end{align*}
for all $t \geq 0$.
Hence,
\begin{align*}
    \rho x_1^0+(1-\rho)x_2^0 \in \A_i,
\end{align*}
which proofs the claim.
\end{proof}

We immediately obtain the following result.

\begin{corollary}\label{Cor_A_convex}
The admissible set $\A$ of system \eqref{eq:Linear_System} is convex.
\end{corollary}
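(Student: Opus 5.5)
Two routes are available, and I would present the direct one, noting the other as an alternative. The direct route repeats the argument of Proposition~\ref{Prop:Ai_convex} with the full constraint set $\G$ in place of $\G_i$. Take $x^0_1,x^0_2\in\A$ with admissible controls $u_1,u_2\in\UU$, so that $Hx^{(x^0_k,u_k)}(t)+h\preceq 0$ for all $t\ge 0$ and $k=1,2$. Fix $\rho\in[0,1]$.

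The superposition identity from the proof of Proposition~\ref{Prop:Ai_convex} gives
\begin{align*}
x^{(\rho x^0_1+(1-\rho)x^0_2,\;\rho u_1+(1-\rho)u_2)}(t)=\rho\, x^{(x^0_1,u_1)}(t)+(1-\rho)\,x^{(x^0_2,u_2)}(t).
\end{align*}
The combined control is measurable and takes values in $\U$ because $\U$ is convex, so it lies in $\UU$. Applying $H$ and adding $h=\rho h+(1-\rho)h$, the resulting vector is a convex combination of two vectors that are componentwise $\preceq 0$. It is therefore $\preceq 0$ for every $t\ge 0$. Hence the convex combination of initial states lies in $\A$, and $\A$ is convex.

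The alternative route, which is the one the phrase ``immediately'' suggests, is to show that $\A=\bigcap_{i=1}^p\A_i$. Convexity then follows from Proposition~\ref{Prop:Ai_convex}, since an intersection of convex sets is convex. The inclusion $\A\subseteq\A_i$ is trivial, because a single control keeping the state in $\G\subseteq\G_i$ keeps it in each $\G_i$.

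The reverse inclusion is the main obstacle. A point of $\bigcap_i\A_i$ has, for each $i$, its own admissible control $u_i$, and no single control need satisfy all constraints at once. So in general $\A\subsetneq\bigcap_i\A_i$, and I would not rely on this identity. I would use the direct argument, which needs no such identity; the corollary then holds regardless, and the intersection $\bigcap_i\A_i$ is only a convex outer approximation of $\A$.
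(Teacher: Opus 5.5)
Your direct argument is correct. It is the proof of Proposition~\ref{Prop:Ai_convex} with $\G$ in place of $\G_i$, and it uses only three facts: linearity of \eqref{eq:Linear_System}, convexity of $\U$, and convexity of $\G$. It would therefore go through for any convex constraint set.

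The paper's proof is exactly your ``alternative route''. It combines Proposition~\ref{Prop:Ai_convex} with the identity $\A=\bigcap_{i=1}^p\A_i$, which it asserts without justification. Your reservation about that identity is well founded: only $\A\subseteq\bigcap_i\A_i$ is automatic, and equality can fail even under (A1)--(A2). For example, take
\begin{itemize}
\item the system $\dot x_1=x_2$, $\dot x_2=u$, $\dot x_3=0$ with $\U=[-1,1]$;
\item the constraints $x_1-1\le 0$ and $-0.1x_1-x_2-x_3+0.5\le 0$.
\end{itemize}
Both rows satisfy (A2), and (A1) holds at the equilibrium $(1,0,5)$ with $u\equiv 0$. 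The point $(0,0.6,0)$ lies in $\A_1$ (use $u\equiv -1$; then $x_1\le 0.18$). It also lies in $\A_2$ (use $u\equiv 1$; then the second constraint function equals $-0.1-1.06t-0.05t^2<0$). Yet it is not in $\A$. Along any trajectory $x_3\equiv 0$, so the two constraints together force $x_2\ge 0.5-0.1x_1\ge 0.4$, and $x_1$ eventually exceeds $1$.

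So your route is the one that actually proves the corollary, whereas the paper's one-line proof rests on an identity that is false in general. The same identity is used later to claim $\bigcap_i\tilde\A_i\subseteq\A$. Without it, one only gets $\bigcap_i\tilde\A_i\subseteq\bigcap_i\A_i$, which, as you note, is an outer set for $\A$.
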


\begin{proof}
By Proposition \ref{Prop:Ai_convex}, the individual admissible sets $\A_i$ are convex for $i = 1,2,\ldots,p$.
Since the intersection of convex sets is again convex and
\begin{align*}
\A = \bigcap_{i=1}^{p} \A_i,
\end{align*}
the result follows immediately.
\end{proof}


\begin{proposition}\label{Prop:Normal_Test}
For every point $x_b \in \partial \mathcal{A}_i$, let $n(x_b)$ denote an outward normal vector of $\mathcal{A}_i$ at $x_b$.

Then, $x \in \mathcal{A}_i$ if and only if
\begin{align*}
n(x_b)^\top (x_b-x) \geq 0
\end{align*}
for every boundary point $x_b \in \partial \mathcal{A}_i$.
\end{proposition}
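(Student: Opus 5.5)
The plan is to derive the statement from the convexity of $\A_i$ in Proposition~\ref{Prop:Ai_convex}, the closedness of $\A_i$ (recalled from \cite{Levine_2013} for admissible sets, and holding verbatim for a single constraint), and the standard supporting/separating hyperplane theorems. The normal $n(x_b)$ will be read in the convex-analysis sense, as a nonzero element of the normal cone $N_{\A_i}(x_b)=\{v : v^\top(y-x_b)\le 0 \ \forall y\in\A_i\}$. At smooth barrier points, where $[\partial\A_i]_-$ is locally an $(n-1)$-manifold by Remark~\ref{Rem:UT_points_n-2_dimensional}, this is the usual outward normal. The requirement ``for every boundary point'' is read as: for every $x_b$ and every admissible choice of outward normal at $x_b$.

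\emph{Necessity.} Let $x\in\A_i$ and $x_b\in\partial\A_i$. Since $\A_i$ is convex and closed, the supporting hyperplane theorem gives a nonzero $v$ with $v^\top(y-x_b)\le 0$ for all $y\in\A_i$, and the outward normal $n(x_b)$ is such a $v$. Taking $y=x$ yields $n(x_b)^\top(x_b-x)\ge 0$.

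\emph{Sufficiency.} We argue by contraposition. Suppose $x\notin\A_i$. Since $\A_i$ is nonempty (Assumption (A1)) and closed, there is a Euclidean projection $x_b=\Pi_{\A_i}(x)$, and it is unique by convexity. Because $x\notin\A_i$, we have $x_b\neq x$ and $x_b\in\partial\A_i$. The projection characterization gives $(x-x_b)^\top(y-x_b)\le 0$ for all $y\in\A_i$. Hence $v=x-x_b$ is an outward normal at $x_b$, and
$$v^\top(x_b-x)=-\|x-x_b\|^2<0,$$
so the inequality fails at this boundary point.

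The main obstacle is sufficiency at nonsmooth boundary points. If $n(x_b)$ were only some fixed chosen normal at a kink, for instance where the barrier meets the usable part $[\partial\A_i]_0$ on $[\G_i]_0$, the test could be too weak. The projection argument handles this because it produces a specific normal, namely $x-x_b$. If a fixed single-valued field $n(\cdot)$ is intended, I would first try to show that the set of smooth boundary points is dense in $\partial\A_i$. A nearest-point argument then applies: perturbing $x_b$ to nearby smooth points, whose normals are forced to be close to $(x-x_b)/\|x-x_b\|$, keeps the strict violation by continuity.
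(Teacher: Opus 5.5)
Under the reading you fix (every nonzero element of the normal cone at every boundary point is tested), your proof is correct. It is essentially the paper's proof: the paper only cites Rockafellar's separation result for closed convex sets. Your supporting-hyperplane argument (necessity) and metric-projection argument (sufficiency) unpack that citation, and they make explicit what it uses silently, namely closedness of $\mathcal{A}_i$ and nonemptiness via (A1). The projection also buys something. It produces a separating hyperplane that actually touches $\mathcal{A}_i$ at a boundary point. A bare ``intersection of closed half-spaces'' statement does not give this when $\mathcal{A}_i$ is unbounded and $\sup_{y\in\mathcal{A}_i} v^\top y$ is not attained.

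The step that would fail is in your last paragraph. At a kink $x_b$, outer semicontinuity only puts the normals at nearby smooth points close to \emph{some} element of $N_{\mathcal{A}_i}(x_b)$, not to $x-x_b$, so continuity does not preserve the violation. Take the wedge $K=\{(s,t): |t|\le s/2\}$ and $x=(-1,1)$. The projection is $x_b=0$. Yet at every lower-edge point $(s,-s/2)$, with outward normal $(-1,-2)$, one has $(-1,-2)^\top\big((s,-s/2)-(-1,1)\big)=1>0$. So the test is passed at smooth points arbitrarily close to $x_b$, whose normals are far from $(-1,1)/\sqrt{2}$.

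If a fixed single-valued selection $n(\cdot)$ is meant, as the use of the adjoint $\lambda$ as the normal in Algorithm~\ref{alg:Structured_Barrier_Sampling} suggests, argue along segments instead:
\begin{itemize}
\item Assume $\operatorname{int}\mathcal{A}_i\neq\emptyset$ and take a ball $B\subset\operatorname{int}\mathcal{A}_i$.
\item The segments from $x$ to points of $B$ cross $\partial\mathcal{A}_i$ in a relatively open patch.
\item Non-smooth boundary points of a convex set are nowhere dense, so this patch contains a smooth point $q=(1-t)x+ty$ with $y\in B$ and $t\in(0,1)$.
\item Then $n(q)^\top(q-x)=\tfrac{t}{1-t}\,n(q)^\top(y-q)<0$, strictly because $y$ is interior.
\end{itemize}

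The interior assumption is genuinely needed. Take $\dot x_1=x_2$, $\dot x_2=-x_1$ with $\mathbb{U}=\{0\}$ and the constraint $x_1\le 0$. All of the paper's assumptions hold, including (A1)--(A2), yet $\mathcal{A}_1=\{0\}$, and a single chosen normal at $0$ only cuts out a half-plane. So your all-normals reading is the version that holds without further hypotheses.
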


\begin{proof}
The proof follows from \cite[Corollary 11.5.2]{Rockafellar_1970}.
\end{proof}


\begin{remark}
Consider the following extension to Hammerstein systems
\begin{align}\label{eq:Hammerstein_System}
\begin{split}
	\dot{x}(t) &= Ax(t) + B\varphi(u(t)) \\
	x(t_0) &= x^0,
\end{split}
\end{align}
where $\varphi : \mathbb R^m \to \mathbb R^m$ denotes a static nonlinear input transformation. 

Introducing the virtual input $v(t) = \varphi(u(t))$ yields the transformed system
\begin{align}\label{eq:Hammerstein_System_transformed}
\begin{split}
	\dot{x}(t) &= Ax(t) + Bv(t)\\
	x(t_0) &= x^0.
\end{split}
\end{align}
Assuming $\phi(\mathbb{U}) = \mathbb{V}$ is convex and compact, the previous assumptions are satisfied.
Hence, since the constraints are not modified, Proposition \ref{Prop:Ai_convex}, Corollary \ref{Cor_A_convex} and Proposition \ref{Prop:Normal_Test} can be extended to system \eqref{eq:Hammerstein_System}.
\end{remark}

\section{Exact Individual Barrier Sampling}\label{Sec:Structured_Barrier_Sampling}



Consider the linear control system \eqref{eq:Linear_System}.
For a given initial condition $x(t_0) = x^0$ and an admissible control input $u(\cdot) \in \mathcal{U}$, the solution of this system can be expressed explicitly.
For all $t \ge t_0$, the unique solution is given by
\begin{align}\label{eq:explicit_solution_linear}
x^{(x^0,u)}(t) = e^{A(t - t_0)} x^0 + \int_{t_0}^{t} e^{A(t - \tau)} B u(\tau) \, \mathrm{d} \tau.
\end{align}
This representation holds for any measurable and bounded control input $u(\cdot)$.

Suppose now that the control input is piecewise constant on time intervals of the length $\delta > 0$, i.e.,
\begin{align*}
u(t) = u(k \delta) = \textnormal{const.} \quad \forall \, t \in [k \delta, (k+1) \delta), \, k \in \mathbb{N} .
\end{align*}
Then, the continuous-time system \eqref{eq:Linear_System} admits an exact discrete-time representation of the form
\begin{align}\label{eq:Discrete_Linear_System}
\begin{split}
x(k+1) &= A_d x(k) + B_d u(k) \\
x(0) &= x^0,
\end{split}
\end{align}
where the discrete-time system matrices are given by
\begin{align*}
A_d = e^{A \delta}, \qquad B_d = \int_0^\delta e^{A \tau} \, \mathrm{d} \tau B,
\end{align*}
see, e.g., \cite{Lunze2_2020}[Chapter 11.1.6].
The matrices $A_d$ and $B_d$ can be computed efficiently using the zero-order hold representation
\begin{align*}
    \begin{pmatrix}
        A_d & B_d \\
        0 & I
    \end{pmatrix} = \exp \left( \begin{pmatrix}
        A & B \\
        0 & 0
    \end{pmatrix} \delta \right).
\end{align*}

\subsection{Algorithm for Exact Individual Barrier Sampling}
The characterization in Section \ref{Sec:Individual_Admissible_Sets} provides an exact representation of the individual barriers in continuous time.
In particular, by Lemma \ref{Lem:Representation_Zi}, the set of ultimate tangentiality points can be written as
\begin{align*}
\mathcal{Z}_i = \{ \zeta_i \} + \mathcal{L}_i,
\end{align*}
where $\zeta_i \in \mathcal{Z}_i$ and 
\begin{align*}
\mathcal{L}_i = \ker \left\lbrace \begin{pmatrix}
H_i \\ H_i A
\end{pmatrix} \right\rbrace.
\end{align*}
Since the switching function \eqref{eq:Switching_Function} is independent of the state $x$, all trajectories originating from $\mathcal{Z}_i$ share the same switching structure of $\bar{u}$.
If the discretization step $\delta$ is consistent with the switching times, the discrete-time system \eqref{eq:Discrete_Linear_System} therefore propagates these trajectories exactly over intervals with constant control.

At time step $k$, let
\begin{align*}
\mathcal{Z}_i(k) = \zeta_i(k) + \mathcal{L}_i(k),
\end{align*}
where $\zeta_i(k) \in \mathcal{Z}_i(k)$. Since the barrier trajectories are propagated backward from the constraint boundary, the discrete-time dynamics are inverted to obtain
\begin{align*}
x(k-1) = A_d^{-1}\left(x(k)-B_d u(k-1)\right).
\end{align*}
Consequently,
\begin{align*}
\zeta_i(k-1)
&= A_d^{-1}\zeta_i(k)-A_d^{-1}B_d u(k-1), \quad \mathcal{L}_i(k-1)= A_d^{-1}\mathcal{L}_i(k).
\end{align*}
Since $A_d=e^{A\delta}$ is nonsingular, the dimension of $\mathcal{L}_i(k)$ is preserved during the propagation.

For sampling, we represent $\mathcal{L}_i(k)$ by an orthonormal basis
\begin{align*}
\mathcal{L}_i(k) = \operatorname{span}(N_i(k)), \; N_i(k)\in \mathbb{R}^{n\times(n-2)}, \; N_i(k)^\top N_i(k)=I_{n-2}.
\end{align*}
Thus, every point $\zeta \in \mathcal{Z}_i(k)$ admits the representation
\begin{align}\label{eq:Sampling_Representation}
\zeta = \zeta_i(k)+N_i(k)\alpha, \quad \alpha\in\mathbb{R}^{n-2}.
\end{align}
The propagated direction space is obtained from
\begin{align*}
\mathcal{L}_i(k-1)
=\operatorname{span}\left(A_d^{-1}N_i(k)\right).
\end{align*}
Since $A_d^{-1}N_i(k)$ is generally not orthonormal, we set
\begin{align*}
N_i(k-1)
=\operatorname{orth}\left(A_d^{-1}N_i(k)\right),
\end{align*}
where $\operatorname{orth}(\cdot)$ denotes an orthonormalization procedure.

Hence, $\zeta_i(k)$ and $N_i(k)$ completely characterize $\mathcal{Z}_i(k)$, while the sampling resolution can be selected through $\alpha$ independently at each time step.

If $\mathcal{A}_i$ is unbounded, sampling is restricted to a region of interest $\mathcal{X}_{\mathrm{ROI}}\subset\mathbb{R}^n$.
To enlarge the resulting inner approximation, admissible corner points of $\mathcal{X}_{\mathrm{ROI}}$ and admissible intersections of $[\mathcal{G}_i]_0$ with $\mathcal{X}_{\mathrm{ROI}}$ are added to the set of samples.
Their admissibility is determined using Proposition~\ref{Prop:Normal_Test} and the outward normals obtained from the adjoint system \eqref{eq:Adjoint_Equation}.

All points in $\mathcal{Z}_i(k)$ share the same adjoint $\lambda_i(k)$, since the adjoint is independent of the state.
Its discrete-time propagation is given by
\begin{align*}
\lambda_i(k-1)=A_d^\top\lambda_i(k).
\end{align*}

The resulting sampling procedure is summarized in Algorithm~\ref{alg:Structured_Barrier_Sampling}.

\begin{algorithm}[htbp]
	\caption{Structured Sampling of Admissible Set's Boundary}\label{alg:Structured_Barrier_Sampling}
	\noindent \textbf{Input}: Continuous-time matrices $A \in \mathbb{R}^{n \times n}, B \in \mathbb{R}^{n \times m}$, discrete-time matrices $A_d \in \mathbb{R}^{n\times n}$, $B_d \in \mathbb{R}^{n\times m}$, constraint parameters $H_i \in \R^{1 \times n} ,h_i \in \R$, convex and compact set of control values $\mathbb{U} \subset \mathbb{R}^m$, region of interest $\mathcal{X}_{\mathrm{ROI}} \subset \R^n$, set $C_i$ of admissible corner points of $\mathcal{X}_{\mathrm{ROI}}$ and its intersection points with state constraint $g_i$, number of time steps $K \in \mathbb{N}$, finite parameter grid $\mathbb{G} \subset \mathbb{R}^{(n-2)}$. \\
	\textbf{Output:} Sample Set $\mathcal{S}_i$
	\begin{enumerate}
	\item Initialize $\mathcal{S}_i \gets \emptyset$, $\mathcal{B}_i \gets \emptyset$ and set $M \gets \begin{pmatrix} H_i \\ H_i A \end{pmatrix}$, $\lambda \gets H_i^\top$.
	\item Compute barrier control at ultimate tangentiality set $u^\ast = \mathrm{argmin}_{u \in \mathbb{U}} H_i B u$ and set $m \gets \begin{pmatrix} -h_i \\ -H_i B u^\ast \end{pmatrix}$.
	\item Compute one particular solution $\zeta_{\mathrm{ref}}$ of $M \zeta = m$. Compute orthonormal basis $N$ of $\ker(M)$.
	\item \textbf{For} each $\alpha \in \mathbb{G}$:
	\begin{enumerate}
		\item[(i)] $\zeta \gets \zeta_{\mathrm{ref}} + N \alpha$
		\item[(ii)] \textbf{If} $\zeta \in \mathcal{X}_{\mathrm{ROI}}$ and $H_i \zeta + h_i \leq 0$: $\mathcal{S}_i \gets \mathcal{S}_i \cup \{\zeta\}$, $\mathcal{B}_i \gets \mathcal{B}_i \cup \{(\zeta, \lambda)\}$.
	\end{enumerate}
	\item\textbf{For} $k = 1,2,\ldots,K$:
	\begin{enumerate}
		\item Set $\lambda \gets A_d^\top \lambda$.
		\item Compute barrier control $u^\ast = \mathrm{argmin}_{u \in \mathbb{U}} \lambda^\top B u$.
		\item Set $\zeta_{\mathrm{ref}} \gets A_d^{-1} \left( \zeta_{\mathrm{ref}} - B_d u^\ast \right)$. \\
        Compute orthonormal basis $N \gets \mathrm{orth}(A_d^{-1} N)$.
		\item \textbf{For} each $\alpha \in \mathbb{G}$:
		\begin{enumerate}
			\item[(i)] $\zeta \gets \zeta_{\mathrm{ref}} + N \alpha$
			\item[(ii)] \textbf{If} $\zeta \in \mathcal{X}_{\mathrm{ROI}}$ and $H_i \zeta + h_i \leq 0$: $\mathcal{S}_i \gets \mathcal{S}_i \cup \{\zeta\}$, $\mathcal{B}_i \gets \mathcal{B}_i \cup \{(\zeta,\lambda)\}$. 
		\end{enumerate}
	\end{enumerate}
    \item \textbf{For} every candidate point $x_c \in C_i$:\\
    \textbf{If} $\lambda_\zeta^\top(\zeta - x_c) \geq 0$ for all $(\zeta,\lambda_\zeta) \in \mathcal{B}_i$: $\mathcal{S}_i \gets \mathcal{S}_i \cup \{x_c\}$.
	\item Return $\mathcal{S}_i$.
	\end{enumerate}
\end{algorithm}

\subsection{Runtime Complexity of Sampling Algorithm}\label{Sec:Runtime_Sampling}
Let $n$ denote the state dimension, $m$ the input dimension, $K$ the number of time steps, and $G = | \mathbb{G} |$ the number of grid samples.
Throughout the analysis, we assume classical dense linear algebra operations, see, e.g., \cite[Chapter 1]{Golub_2013}.
In particular, dot products are $\mathcal{O}(n)$, dense matrix–vector multiplications scale as $\mathcal{O}(n^2)$ and matrix-matrix multiplications as  $\mathcal{O}(n^3)$.
The QR factorization of an $n \times (n-2)$ matrix requires $\mathcal{O}(n^3)$ floating point operations, see \cite[Chapter 5.2]{Golub_2013}.
Finally, assuming a standard dense interior-point method, solving a linear program in $m$ variables requires $\mathcal{O}(m^3)$ operations, see \cite{Vaidya_1987}.

During initialization, the construction of the matrix $M$ requires one dense multiplication and scales as $\mathcal{O}(n^2)$.
Solving the linear system $Mz = m$ and computing an orthonormal basis of $\ker(M)$ of $(n-2)$ vectors in $\R^n$ require matrix factorizations of size $n$, resulting in complexity $\mathcal{O}(n^3)$.
The barrier control is obtained by solving a linear optimization problem over the convex and compact input set $\mathbb{U} \subset \R^m$.
In the worst case, this corresponds to a linear program in $m$ variables with complexity $\mathcal{O}(m^3)$.
For structured representations of $\mathbb{U}$, such as box constraints or polytopes, the cost can be reduced to $\mathcal{O}(m)$.
Hence, the overall initialization has complexity $\mathcal{O}(n^3 + m^3)$.

For each parameter value $\alpha \in \mathbb{G}$ in the initial sampling step, the computation of $\zeta = \zeta_{\textnormal{ref}} + N \alpha$ requires a dense matrix–vector multiplication and therefore scales as $\mathcal{O}(n^2)$.
The subsequent membership test in the region of interest involves evaluating linear inequalities and is at most $\mathcal{O}(n^2)$. Repeating this for all $G$ samples yields a complexity $\mathcal{O}(G n^2)$.

During time propagation, each of the $K$ steps involves in (a) a dense multiplication $A_d^\top \lambda$ with complexity $\mathcal{O}(n^2)$, in (b) one barrier control computation with worst-case complexity $\mathcal{O}(m^3)$, and in (c) a propagation and re-orthonormalization of an $n \times (n-2)$ basis matrix, which scales as $\mathcal{O}(n^3)$.
The sampling step discussed during the initialization is then repeated in (d) with complexity $\mathcal{O}(Gn^2)$.
Hence, each time step has complexity $\mathcal{O}(n^3 + m^3 + G n^2)$.

Finally, the candidate points contained in $C_i$ are tested for admissibility using Proposition~\ref{Prop:Normal_Test}.
Let $C = | C_i |$ denote the number of candidate points and $S = | \mathcal{B}_i |$ the number of sampled boundary points.
For each $x_c \in C_i$, the inner product $\lambda_z^\top (z - x_c)$ is evaluated for every sampled boundary point.
Each evaluation requires $\mathcal{O}(n)$ operations, resulting in a total complexity of $\mathcal{O}(CSn)$.
Since only samples satisfying the state constraints are included in $\mathcal{B}_i$, we have $S \leq (K+1)G$ and therefore $\mathcal{O}(CSn) = \mathcal{O}(CKGn)$.

Collecting the costs of initialization, time propagation, and candidate-point classification gives the following result.

\begin{proposition}\label{Prop:Complexity_Sampling}
Under the assumptions stated in Section~\ref{Sec:Runtime_Sampling}, Algorithm~\ref{alg:Structured_Barrier_Sampling} has runtime complexity
\begin{align*}
\mathcal{O}\!\left( n^3 + m^3 + Gn^2 + K(n^3 + m^3 + Gn^2) + CKGn \right),
\end{align*}
where $n$ denotes the state dimension, $m$ the input dimension, $K$ the number of time steps, $G$ the number of grid samples, and $C$ the number of candidate points tested for admissibility.
In particular, the algorithm scales cubically in the state and input dimensions, and linearly in both the time horizon and the number of grid samples.
Furthermore, for $C \ll G$ and $G \gg n,m$, the complexity simplifies to
\begin{align*}
\mathcal{O}(KGn^2).
\end{align*}
\end{proposition}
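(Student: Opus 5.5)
The argument is a line-by-line accounting of the cost of Algorithm~\ref{alg:Structured_Barrier_Sampling}, summing the per-step estimates already derived in Section~\ref{Sec:Runtime_Sampling}.

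\textbf{Initialization (steps 1--3).} Forming $M$ costs $\mathcal{O}(n^2)$. Computing $u^\ast$ is one linear program in $m$ variables, costing $\mathcal{O}(m^3)$. Obtaining $\zeta_{\mathrm{ref}}$ and the orthonormal kernel basis $N$ requires one factorization (e.g.\ QR or SVD of $M^\top$), costing $\mathcal{O}(n^3)$. Together these give $\mathcal{O}(n^3+m^3)$.

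\textbf{Initial sampling (step 4).} Each $\alpha\in\mathbb{G}$ needs one matrix--vector product $N\alpha$, costing $\mathcal{O}(n^2)$, followed by the membership tests, which are at most $\mathcal{O}(n^2)$. Summed over the grid this is $\mathcal{O}(Gn^2)$.

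\textbf{Time loop (step 5).} Each iteration consists of:
\begin{itemize}
\item (a) the update $A_d^\top\lambda$, costing $\mathcal{O}(n^2)$;
\item (b) one linear program, costing $\mathcal{O}(m^3)$;
\item (c) the update of $\zeta_{\mathrm{ref}}$, costing $\mathcal{O}(n^2+nm)$, plus the product $A_d^{-1}N$ and its QR re-orthonormalization, costing $\mathcal{O}(n^3)$;
\item (d) a repetition of step 4, costing $\mathcal{O}(Gn^2)$.
\end{itemize}
The term $nm$ from (c) is absorbed, since $nm\le n^3+m^3$. The inverse $A_d^{-1}$ (or its factorization) is computed once for $\mathcal{O}(n^3)$ and reused, so it is absorbed into the initialization. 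Each iteration therefore costs $\mathcal{O}(n^3+m^3+Gn^2)$, and $K$ iterations cost $K$ times that.

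\textbf{Candidate classification (step 6).} For each of the $C$ candidates, the algorithm evaluates one inner product of cost $\mathcal{O}(n)$ per stored pair in $\mathcal{B}_i$. Since at most $G$ pairs are stored per time level, $|\mathcal{B}_i|\le (K+1)G$. This gives $\mathcal{O}(C(K+1)Gn)=\mathcal{O}(CKGn)$.

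Adding the four contributions yields the displayed bound. The qualitative claims follow by reading off exponents. Each term is at most cubic in $n$ and in $m$. Each term is at most linear in $K$ and in $G$ separately. The product $KG$ appears, but for fixed $K$ the bound is linear in $G$ and for fixed $G$ it is linear in $K$.

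\textbf{Simplification.} Assume $C\ll G$ and $G\gg n,m$. In the strong form $G\gtrsim n$ and $G\gtrsim m^3/n^2$, we get $n^3\le Gn^2$ and $m^3\le Gn^2$. The per-step cost is then $\mathcal{O}(Gn^2)$, and the initialization cost is dominated by one time step. Moreover $CKGn\le KGn^2$ whenever $C\le n$.

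\textbf{Main obstacle.} The delicate part is this last simplification, because the hypotheses ``$C\ll G$'' and ``$G\gg n,m$'' are stated only informally. The condition actually needed is $C=\mathcal{O}(n)$, i.e.\ $C$ small relative to the dimension $n$. The condition $C\ll G$ alone does not yield $CKGn\le KGn^2$. Likewise, $m^3\le Gn^2$ needs $G\gtrsim m^3/n^2$, not merely $G\gg m$. I will therefore state these precise asymptotic conditions explicitly in the proof, so that the simplified order $\mathcal{O}(KGn^2)$ is justified.
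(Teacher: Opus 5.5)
Your proposal is correct and follows the paper's own argument: sum the per-step costs of initialization, the initial grid sampling, the $K$ propagation steps, and the candidate test, using $|\mathcal{B}_i|\le (K+1)G$. Your caveat on the final simplification is also justified, and the paper's one-line proof passes over it: the stated hypotheses $C\ll G$ and $G\gg n,m$ do not by themselves make $CKGn$ and $m^3$ negligible against $KGn^2$, since one needs $C=\mathcal{O}(n)$ (restrictive, as $C_i$ may contain up to $2^n$ corners of $\mathcal{X}_{\mathrm{ROI}}$) and $m^3=\mathcal{O}(Gn^2)$ (automatic when $m\le n$).
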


\begin{proof}
The result follows by summing the computational costs of the individual steps derived above.
\end{proof}

\begin{remark}
For comparison, we briefly discuss the computational complexity of established set-based methods for admissible-set computation.

Hamilton--Jacobi-based methods \cite{Mitchell_2005,Chen_2017,Bansal_2017}
discretize the state space and therefore suffer from the curse of
dimensionality, with computational effort growing exponentially in the state dimension.

Sampling-based methods, such as \cite{Gillula_2014}, avoid a full
state-space discretization but require the solution of a feasibility problem for each sampled direction and time interval.
For a fixed number of time intervals, the complexity reported in \cite{Gillula_2014} is
\begin{align*}
	\mathcal O \left(N \log(d)\Phi(n)\right),
\end{align*}
where $N$ denotes the number of sampled directions, $d$ the diameter of the admissible set, and $\Phi(n)$ the complexity of the underlying feasibility program.

Set-propagation approaches based on geometric set representations, e.g., zonotopes \cite{Wetzlinger_2025} or related set-valued techniques like support functions \cite{Le_Guernic_2009}, perform a sequence of set operations at each propagation step, whose complexity depends on the state dimension and the chosen set representation.

The barrier method \cite{Levine_2013} propagates individual trajectories along the admissible set's boundary.
Although only the $(n-1)$-dimensional barrier is sampled, covering it with a prescribed resolution requires a number of trajectories that grows rapidly with the state dimension.
Moreover, trajectories can be propagated individually until their stopping points are reached \cite{Esterhuizen_2014}, resulting in trajectory-dependent propagation horizons.
In practice, a prescribed finite horizon can be used
for trajectories that do not reach a stopping point.
\end{remark}

\section{Construction of Polytopic Inner Approximation}\label{Sec:Construction_Inner_Approximation}
%
We briefly recall the notions from polytope theory required for the construction of the proposed inner approximation, see, e.g., \cite{Borrelli_Bemporad_Morari_2017} for a comprehensive overview.

A polytope $\mathcal{P} \subset \R^n$ admits two equivalent representations, the halfspace ($\mathcal{H}$-)representation and the vertex ($\mathcal{V}$-)representation \cite{Ziegler_2012}.

\begin{definition}[$\mathcal{H}$-Polytope]\label{Def:H-Polytope}
A set $\mathcal{P} \subset \R^n$ is called an $\mathcal{H}$-\emph{polytope} if it can be written as a bounded intersection of finitely many closed halfspaces.
Equivalently, there exists a $J \in \mathbb{N}$, a matrix $A_{\mathcal{P}} \in \R^{J \times n}$ and a vector $b_{\mathcal{P}} \in \R^J$ such that
\begin{align*}
\mathcal{P} := \lbrace x \in \R^n \, \vert \, A_{\mathcal{P}} x \leq b_{\mathcal{P}} \rbrace.
\end{align*}
\end{definition}

\begin{definition}[$\mathcal{V}$-Polytope]\label{Def:V-Polytope}
A set $\mathcal{P} \subset \R^n$ is called a $\mathcal{V}$-\emph{polytope} if it is the convex hull of a finite set of points $\mathcal{S} \subset \R^n$, i.e.,
\begin{align*}
\mathcal{P} := \textnormal{conv}(\mathcal{S}).
\end{align*}
\end{definition}

For each individual state constraint $i \in \{1,2,\ldots,p\}$, let $\mathcal{S}_i \subseteq \AB_i$ denote the finite set of boundary samples generated by Algorithm~\ref{alg:Structured_Barrier_Sampling}.
The corresponding $\mathcal{V}$-polytope is
\begin{align*}
    \tilde{\A}_i = \textnormal{conv}(\mathcal{S}_i).
\end{align*}
Since $\mathcal{A}_i$ is convex by Proposition~\ref{Prop:Ai_convex}, we immediately obtain $\tilde{\mathcal{A}}_i \subseteq \mathcal{A}_i$.
This gives the following result.

\begin{lemma}\label{Lem:Mi_subset_Ai}
Let $\A_i$ be the individual admissible set of system \eqref{eq:Linear_System} for some $i \in \{1,2,\ldots,p\}$.

For any finite set $\mathcal{S}_i = \{ x_1,x_2,\ldots,x_N \} \subseteq \AB_i$ of $N \in \N$ points holds
\begin{align*}
	\textnormal{conv}(\mathcal{S}_i) \subseteq \A_i.
\end{align*}
\end{lemma}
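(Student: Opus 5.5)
The inclusion follows from two facts: every boundary point of $\A_i$ lies in $\A_i$, and $\A_i$ is convex. I will show that $\mathcal{S}_i \subseteq \A_i$ and then invoke convexity (Proposition~\ref{Prop:Ai_convex}) to pass from the finite sample set to its convex hull.

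\textbf{Step 1: $\AB_i \subseteq \A_i$.} The paper recalls from \cite{Levine_2013} that the admissible set is closed and contains its boundary. The same holds for the individual admissible set $\A_i$, since it is the admissible set of the system subject to the single constraint \eqref{eq:Single_Linear_Constraint}. If one prefers not to rely on the citation, closedness can be checked directly. Take $\bar x_k \to \bar x$ with $\bar x_k \in \A_i$ and admissible controls $u_k \in \UU$. Because $\U$ is convex and compact, $\{u_k\}$ has a weak-$*$ convergent subsequence in $L^\infty$ with limit in $\UU$. By the explicit solution formula \eqref{eq:explicit_solution_linear}, the corresponding trajectories converge pointwise in $t$, so the limit trajectory satisfies $H_i x(t) + h_i \le 0$ for all $t \ge 0$. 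Hence $\bar x \in \A_i$. This step is the only one that is more than bookkeeping, and citing the closedness result already stated in the paper settles it.

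\textbf{Step 2: the convex-hull inclusion.} Let $x \in \textnormal{conv}(\mathcal{S}_i)$, so that $x = \sum_{j=1}^N \rho_j x_j$ with $\rho_j \ge 0$ and $\sum_j \rho_j = 1$. By Step 1, every $x_j \in \mathcal{S}_i \subseteq \AB_i$ belongs to $\A_i$.
\begin{itemize}
\item Proposition~\ref{Prop:Ai_convex} gives the two-point case directly.
\item Induction on $N$ extends it to $N$ points. Write
\begin{align*}
x = \rho_N x_N + (1-\rho_N) \sum_{j<N} \frac{\rho_j}{1-\rho_N} x_j
\end{align*}
when $\rho_N < 1$; the case $\rho_N = 1$ is trivial.
\item Equivalently, a convex set contains all convex combinations of its points.
\end{itemize}
Therefore $x \in \A_i$, which proves $\textnormal{conv}(\mathcal{S}_i) \subseteq \A_i$.

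Algorithm~\ref{alg:Structured_Barrier_Sampling} also adds corner points from $C_i$, which need not lie on $\AB_i$. The lemma as stated only assumes $\mathcal{S}_i \subseteq \AB_i$, so this does not affect the argument. If one wants to cover those points as well, the same argument applies to any finite $\mathcal{S}_i \subseteq \A_i$, and the corner points are certified admissible via Proposition~\ref{Prop:Normal_Test}.
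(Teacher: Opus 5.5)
Your proof is correct and is essentially the paper's argument. The paper also gets $\textnormal{conv}(\mathcal{S}_i)\subseteq\A_i$ directly from the convexity of $\A_i$ (Proposition~\ref{Prop:Ai_convex}) together with the fact, recalled from \cite{Levine_2013}, that the admissible set is closed and contains its boundary; you simply make the closedness step (via a valid weak-$*$ compactness argument) and the finite convex-combination induction explicit.

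One caveat applies only to your closing aside, not to the lemma: Algorithm~\ref{alg:Structured_Barrier_Sampling} tests each corner point only against the finitely many sampled pairs in $\mathcal{B}_i$. By Proposition~\ref{Prop:Normal_Test} that is a necessary but not sufficient condition, since it only places the point in an outer approximation of $\A_i$. Those points are therefore not actually certified to lie in $\A_i$.
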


To obtain a full-dimensional approximation, the samples are required to satisfy
\begin{align*}
    \dim(\textnormal{aff}(\mathcal{S}_i)) = n.
\end{align*}

The $\mathcal{V}$-representation is subsequently converted to an
$\mathcal{H}$-representation for the intersection of the individual
approximations.
Since
\begin{align*}
    \A = \bigcap_{i=1}^{p}\A_i,
\end{align*}
the individual inner approximations yield
\begin{align*}
    \tilde{\A} = \bigcap_{i=1}^{p}\tilde{\A}_i \subseteq \A.
\end{align*}

\subsection{Conversion of Samples to Polytope}
Let $\mathcal{S}=\{x_1,\ldots,x_N\} \subseteq \AB_i$ denote the samples associated with an individual admissible set
$\mathcal{A}_i$.
The convex hull
\begin{align*}
\mathcal{P}=\operatorname{conv}(\mathcal{S})
\end{align*}
is computed using QuickHull \cite{Barber_1996}, e.g., MATLAB's \textit{convhulln}.

For each facet $\mathcal{F}_j(\mathcal{P})$, QuickHull provides the
corresponding facet vertices $\mathcal{V}_j(\mathcal{P})$.
A normal vector $a_j$ is computed for each facet from these vertices and the associated offset is given by
\begin{align*}
b_j=a_j^\top v,
\end{align*}
for any vertex $v \in \mathcal{V}_j(\mathcal{P})$.
The orientation of $a_j$ is chosen such that
\begin{align*}
a_j^\top C \leq b_j, \quad \forall \, j = 1,2,\ldots,J
\end{align*}
using the centroid $C$ of the samples in $S$.
Since $C \in \operatorname{int}(\operatorname{conv}(\mathcal S))$, this selects the interior side of $\mathcal{P}$ for each facet.
The resulting halfspace representation is therefore
\begin{align*}
\mathcal{P} = \left\{ x \in \mathbb{R}^n \; \vert \; a_j^\top x \leq b_j,\quad j=1,\ldots,J \right\}.
\end{align*}
The complete conversion from the sampled boundary points to the
$\mathcal{H}$-representation is summarized in Algorithm~\ref{alg:QuickHull_Approximation}.

\begin{algorithm}[htbp]
	\caption{Conversion of Sample Points to Polytope in Halfspace Representation}\label{alg:QuickHull_Approximation}
	\noindent \textbf{Input}: finite set of $N$ points $\mathcal{S} = \{x_1, \dots, x_N\} \subset \mathbb{R}^n$ with $N \geq n+1$ \\
	\textbf{Output:} matrix $\AP \in \R^{J \times n}$ and vector $\bP \in \R^J$ with $J \in \N$ characterizing the halfspace representation of the polytope $\mathcal{P} := \mathrm{conv}(\mathcal{S}) = \{x \in \R^n \, \vert \, \AP x \leq \bP \}$
	\begin{enumerate}
		\item Verify $\mathcal{S}$ spans an $n$-dimensional subspace, i.e., $\dim(\textnormal{aff}(\mathcal{S})) = n$. If the samples are degenerate, $\mathrm{conv}(\mathcal{S})$ cannot be represented by an $n$-dimensional polytope and the algorithm terminates.
		\item Determine the number $J$ of facets of $\mathcal{P}$ and compute the sets of vertices $\mathcal{V}_j(\mathcal{P}) \subset \mathcal{S}$ for all facets $\mathcal{F}_j(\mathcal{P})$, $j = 1,2,\ldots,J$, using a QuickHull algorithm.
		\item Compute the centroid $C$ of the set $\mathcal{S}$ as $C = \frac{1}{N} \sum_{i = 1}^N \, x_i$.
		\item \textbf{For} each facet $\mathcal{F}_j(\mathcal{P})$, $j = 1,2,\ldots,J$, of $\mathcal{P}$:
		\begin{enumerate}
    			\item Compute a normal vector $a_j$ to the facet $\mathcal{F}_j(\mathcal{P})$.
    			\item Compute offset $b_j = a_j^\top v$ for some $v \in \mathcal{V}_j(\mathcal{P})$ .
    			\item \textbf{If} $a_j^\top C > b_j$, flip normal orientation: $a_j \gets -a_j$, $b_j \gets -b_j$.
    		\end{enumerate}
		\item Return $\AP = [a_1, \ldots, a_J]^\top$, $\bP = [b_1, \ldots, b_J]^\top$.
	\end{enumerate}
\end{algorithm}

%
%

\begin{remark}
Existing results on the approximation of smooth convex bodies by polytopes constructed from randomly selected boundary samples show that the expected volume error decreases asymptotically as $N^{-2/(n-1)}$, where $N$ denotes the number of boundary samples and $n$ the dimension of the state space \cite{Gruber_1988,Schuett_2003}.
Although these assumptions are not satisfied by the proposed deterministic sampling strategy, the result indicates that the approximation accuracy is closely related to the density and distribution of the boundary samples.
Since the proposed method generates structured boundary samples that reflect the geometry of the admissible set, establishing analogous approximation guarantees for the resulting polytopic inner approximations constitutes an interesting direction for future research.
\end{remark}

\subsection{Runtime Complexity of Conversion Algorithm} 
Let $N$ denote the number of samples in $\mathcal{S}$, $n$ the state dimension and $J$ the number of facets of the resulting polytope.
We use the same assumptions on classical dense linear algebra operations from \cite[Chapter 1 ,5]{Golub_2013} and \cite{Vaidya_1987} as in Section \ref{Sec:Runtime_Sampling}.

In Step 1 of Algorithm~\ref{alg:QuickHull_Approximation}, the condition $\mathrm{dim}(\mathrm{aff}(\mathcal{S})) = n$ is verified by computing the rank of
\begin{align*}
\left[ x_2 - x_1, x_3 - x_1, \ldots, x_N - x_1 \right] \in \R^{n \times (N-1)}.
\end{align*}
Using SVD or QR decomposition requires $\mathcal{O}(n^2 N)$ operations.

Step 2, the convex hull computation, dominates the computational cost.  
Since the samples in $\mathcal{S}$ lie on the boundary of the admissible set, all points may be vertices of the convex hull. 
Consequently, the QuickHull algorithm exhibits its worst-case behavior, which scales as $\mathcal{O}(N^2)$ for $n=2,3$ \cite{Barber_1996}.  
For higher dimensions, the complexity grows with the number of facets $J$ of the polytope, which can scale as $\mathcal{O}(N^{\lfloor n/2 \rfloor})$ according to the Upper Bound Theorem.

The computation of the centroid in Step 3 requires $\mathcal{O}(n N)$ operations.

In Step 4, each of the $J$ facets requires the computation
of a normal vector, an offset, and an orientation test.
The normal computation has complexity $\mathcal{O}(n^3)$, while the remaining operations require $\mathcal{O}(n)$.
Hence, Step 4 has complexity $\mathcal{O}(Jn^3)$.


The individual complexity estimates derived above yield the following overall runtime bound for Algorithm~\ref{alg:QuickHull_Approximation}.

\begin{proposition}\label{Prop:Complexity_QuickHull}
Under the assumptions stated in Section~\ref{Sec:Runtime_Sampling}, Algorithm~\ref{alg:QuickHull_Approximation} has runtime complexity
\begin{align*}
\mathcal{O}\!\left(n^2N + N^2 + nN + Jn^3\right)
\end{align*}
for two- and three-dimensional state spaces, where $N$ denotes the number of boundary samples, $n$ the state dimension, and $J$ the number of facets of the resulting polytope.

For higher-dimensional state spaces, the worst-case complexity of the convex hull computation is bounded by
\begin{align*}
\mathcal{O}\!\left(N^{\lfloor n/2 \rfloor}\right).
\end{align*}
\end{proposition}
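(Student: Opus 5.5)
The bound is obtained the same way as Proposition~\ref{Prop:Complexity_Sampling}: bound each step of Algorithm~\ref{alg:QuickHull_Approximation} separately and add the costs, using the dense linear algebra cost model of Section~\ref{Sec:Runtime_Sampling}.

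\textbf{Step 1 (affine dimension test).} Form the $n\times(N-1)$ difference matrix $[x_2-x_1,\ldots,x_N-x_1]$, which costs $\mathcal{O}(nN)$. Its rank is computed by a QR or SVD factorization, which costs $\mathcal{O}(n^2N)$ since $N\ge n+1$.

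\textbf{Step 3 (centroid).} This is a sum of $N$ vectors in $\R^n$, so it costs $\mathcal{O}(nN)$.

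\textbf{Step 4 (facet loop).} For each of the $J$ facets:
\begin{enumerate}[label=(\roman*)]
\item Compute $a_j$ as a null vector of the $(n-1)\times n$ matrix of differences of $n$ affinely independent facet vertices. One QR or SVD gives $\mathcal{O}(n^3)$.
\item Compute the offset $b_j=a_j^\top v$, which costs $\mathcal{O}(n)$.
\item Run the orientation test $a_j^\top C\le b_j$, which also costs $\mathcal{O}(n)$.
\end{enumerate}
Summing over the facets gives $\mathcal{O}(Jn^3)$.

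\textbf{Step 2 (QuickHull).} This is the step to handle with care. Every sample lies on $\AB_i$, so no point can be discarded as interior and QuickHull runs in its worst case. For $n=2,3$ I would invoke the worst-case bound of \cite{Barber_1996}, namely $\mathcal{O}(N^2)$ for $N$ input points. Summing all four steps then gives $\mathcal{O}(n^2N+N^2+nN+Jn^3)$; the $nN$ term is dominated but kept to mirror the statement.

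\textbf{Higher dimensions.} For $n\ge4$ the hull cost is driven by the output size. By the Upper Bound Theorem, a polytope with $N$ vertices has at most $\mathcal{O}(N^{\lfloor n/2\rfloor})$ facets for fixed $n$. Combined with the output-sensitive worst-case bound of \cite{Barber_1996}, this bounds the hull computation by $\mathcal{O}(N^{\lfloor n/2\rfloor})$ in the worst case. The same facet bound on $J$ can be substituted into the $\mathcal{O}(Jn^3)$ term of Step 4.

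The main obstacle is this last claim: QuickHull's worst case involves more than the raw facet count, in particular the number of processed points per facet and the dimension-dependent constants. I would state it at the level of rigor the paper adopts, treating $n$ as fixed and citing the Upper Bound Theorem together with \cite{Barber_1996}, rather than re-deriving QuickHull's analysis.
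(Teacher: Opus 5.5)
Your proposal follows the paper's argument: you bound each step of Algorithm~\ref{alg:QuickHull_Approximation} separately ($\mathcal{O}(n^2N)$ for the rank test, $\mathcal{O}(N^2)$ for QuickHull when $n=2,3$ via \cite{Barber_1996}, $\mathcal{O}(nN)$ for the centroid, $\mathcal{O}(Jn^3)$ for the facet loop), sum them, and invoke the Upper Bound Theorem for the higher-dimensional case. The caveat you raise about that last step is fair, since the paper likewise equates the hull runtime with the facet bound without further justification, so your argument is at the same level of rigor as the paper's.
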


\begin{proof}
The result follows by summing the computational costs of the individual steps derived above.
\end{proof}

The convex hull computation therefore dominates the runtime for large $N$.
Since the worst-case number of facets grows exponentially with the state dimension, approximate convex hull algorithms may be necessary for high-dimensional systems.

\section{Examples}\label{Sec:Examples}
The proposed methods are demonstrated on a triple integrator and a
higher-dimensional mass-spring-damper chain.

All simulations were performed in MATLAB R2025b on a Linux server equipped with two 128-core AMD Turin processors (3.30 GHz) and 1.5 TB RAM.
The MATLAB process used 24 CPU cores and 40 GB RAM.
No parallel computing or GPU acceleration was used.

\subsection{Triple Integrator}
The triple integrator is used to illustrate the proposed sampling
procedure and the construction of the polytopic inner approximation.

Consider the system
\begin{align}\label{eq:Triple_Integrator}
\begin{split}
\dot{x}(t) &= \begin{pmatrix}
0 & 1 & 0 \\
0 & 0 & 1 \\
0 & 0 & 0
\end{pmatrix} x(t) + \begin{pmatrix}
0 \\ 0 \\ 1
\end{pmatrix} u(t)
\end{split}
\end{align}
with control input $u(t) \in \U = [-1,1]$ for all $t$.
Additionally, consider the affine state constraints
\begin{align}\label{eq:Triple_Integrator_Constraints}
H x + h \leq 0
\end{align}
with
\begin{align*}
H = \begin{pmatrix}
 1 &  1 &  1\\
-1 &  1 &  1\\
 1 & -1 &  1\\
 1 & -1 & -1\\
-1 &  1 & -1\\
-1 & -1 & -1
\end{pmatrix}, \qquad h = \begin{pmatrix}
-2\\
-2\\
-2\\
-2\\
-2\\
-2
\end{pmatrix}.
\end{align*}
We first apply the theory of barriers as described in Section \ref{Sec:Barrier_Theoretic_Characterization_of_the_Admissible_Set}.
For illustration, consider the first state constraint.
Condition~\eqref{eq:Condition_ultimate_tangentiality} yields
\begin{align*}
0 = \min_{u\in[-1,1]} H_1 (A\zeta_1+Bu) = \min_{u\in[-1,1]} (\zeta_{1,2}+\zeta_{1,3}+u) = \zeta_{1,2}+\zeta_{1,3}-1,
\end{align*}
and thus $\zeta_{1,3} = 1 - \zeta_{1,2}$.
Together with the boundary condition $ H_1 \zeta_1 + h_1 = 0,$ we obtain
\begin{align*}
\zeta_{1,1} + \zeta_{1,2} + \zeta_{1,3} -2 = \zeta_{1,1} + \zeta_{1,2} + (1-\zeta_{1,2}) - 2 = \zeta_{1,1} - 1 = 0.
\end{align*}
Therefore, the set of corresponding ultimate tangentiality points is given by
\begin{align*}
\ZZ_1 = \{ \zeta_1 \in \R^3 \; \vert \; \zeta_{1,1} = 1, \zeta_{1,3} = 1-\zeta_{1,2} \},
\end{align*}
where the individual ultimate tangentiality points are parameterized by $\zeta_{1,2} \in \R$.
Analogously, the sets for the remaining states are obtained as
\begin{align*}
\ZZ_2 &= \{ \zeta_2 \in \R^3 \; \vert \; \zeta_{2,2} = 2 \zeta_{2,2} - 1, \zeta_{2,3} = 1+\zeta_{2,2} \}, \\
\ZZ_3 &= \{ \zeta_3 \in \R^3 \; \vert \; \zeta_{3,1} = 2, \zeta_{3,3} = \zeta_{3,2}-1 \}, \\
\ZZ_4 &= \{ \zeta_4 \in \R^3 \; \vert \; \zeta_{4,1} = 2 \zeta_{4,2} + 2, \zeta_{4,3} = \zeta_{4,2}-1 \}, \\
\ZZ_5 &= \{ \zeta_5 \in \R^3 \; \vert \; \zeta_{5,1} = -1, \zeta_{5,3} = 1+\zeta_{5,2} \}, \\
\ZZ_6 &= \{ \zeta_6 \in \R^3 \; \vert \; \zeta_{6,1} = -2, \zeta_{6,3} = -1-\zeta_{6,2} \}.
\end{align*}
The barrier control law $\bar{u}$ is derived from condition \eqref{eq:Cond_Hamiltonian}, which reads
\begin{align*}
0 = \min_{u \in [-1,1]} \lambda(t)^\top \begin{pmatrix} x_2(t) \\ x_3(t) \\ u(t) \end{pmatrix} = \min_{u \in [-1,1]} \lambda_1(t) x_2(t) + \lambda_2(t) x_3(t) + \lambda_3(t) u(t).
\end{align*}
It follows
\begin{align*}
\bar{u}(t) = -\textnormal{sign}(\lambda_3(t)).
\end{align*}
The adjoint system \eqref{eq:Adjoint_Equation} is given by
\begin{align*}
\dot{\lambda}(t) = \begin{pmatrix}
0 & 0 &0 \\ -1 & 0 & 0 \\ 0 & -1 & 0
\end{pmatrix} \lambda(t),
\end{align*}
with the initial value $\lambda_{0i} = \nabla g_i(\zeta)$ for $\zeta_i \in \ZZ_i$, $i = 1,2,\ldots,6$.

When applying Algorithm \ref{alg:Structured_Barrier_Sampling} to sample the boundary of the admissible set of system \eqref{eq:Triple_Integrator} subject to the state constraints \eqref{eq:Triple_Integrator_Constraints}, we chose a time horizon of $20$s and a step size of $0.05$s, resulting in $K = 400$ time steps.
We used a uniform grid on $[-100,100]$ with a grid spacing of $1$, resulting in $G = 201$ grid points per time step, and restricted the area of interest to $\mathcal{X}_{\textnormal{ROI}} = [-10,10]^3 \subseteq \R^3$.
The computation time for sampling was $0.0408$s.
The resulting boundary samples for the first and second the state constraint are shown in Fig.~\ref{fig:Triple_Integrator_Samples}.
Within $\mathcal{X}_{\textnormal{ROI}}$, Algorithm~\ref{alg:Structured_Barrier_Sampling} generated $2136$ samples for the first constraint and $2650$ for the second.
Across all six state constraints, a total of $12812$ boundary samples were obtained. 

For comparison, Fig.~\ref{fig:Triple_Integrator_Samples}(c) shows samples obtained with the barrier-theoretic framework \cite{Levine_2013} using a dense uniform sampling of ultimate tangentiality points $\mathcal Z_2$.
In contrast to the proposed sampling method shown in Fig.~\ref{fig:Triple_Integrator_Samples}(b), the lower part of the admissible set boundary is almost entirely missing.
The difference is highlighted in the zoomed comparison in Fig.~\ref{fig:Triple_Integrator_zoomed}.
This effect is caused by the fact that all barrier trajectories corresponding to this part of the boundary originate from a small neighborhood of the point $\tilde{x} = (-5;-2;-1)^\top$ in the ultimate tangentiality set $\mathcal{Z}_2$.

This phenomenon can be explained by analyzing the evolution of the vector field of system \eqref{eq:Triple_Integrator} under the barrier control.
For points in the ultimate tangentiality set $\mathcal{Z}_2$, the first time derivative of the constraint vanishes by definition.
At the point $\tilde{x}$, however, also the second time derivative vanishes, corresponding to a higher-order tangentiality condition.
Consequently, the temporal extremum of $g(x(t))$ in \cite[Proposition~5.1(iii)]{Levine_2013} changes from a supremum to an infimum, separating barrier-generating trajectories from trajectories that leave the admissible set when integrated backward in time.
As a result, barrier trajectories originating from nearby ultimate tangentiality points evolve almost identically in a neighborhood of $\tilde{x}$, leading to a local loss of sampling resolution.
While this phenomenon may occur for any state constraint, its practical impact depends on the subsequent evolution of the corresponding barrier trajectories.

The constructed polytopic inner approximation of the admissible set $\mathcal{A}_2$, obtained by using Algorithm~\ref{alg:QuickHull_Approximation}, has $492$ facets and is shown in Fig.~\ref{fig:Triple_Integrator_Reconstruction}(a).
Figure~\ref{fig:Triple_Integrator_full_Reconstruction}(b) illustrates the proposed decomposition approach by showing the intersection of the polytopic inner approximations of the admissible sets $\mathcal{A}_2$ and $\mathcal{A}_4$. 
The combined representation $\mathcal{A}=\cap_{i=1}^6 \mathcal A_i$, obtained by intersecting the polytopic inner approximations associated with all six state constraints, contains $3584$ facets and was computed in $0.0451$s. Redundant inequalities are removed to obtain a minimal $\mathcal{H}$-representation using the algorithm of \cite{Klintberg_2018}. Computing the minimal representation required $0.3011$s and reduced the number of inequalities from $3584$ to $292$.
The resulting minimal representation of $\mathcal{A}$ is shown in Fig.~\ref{fig:Triple_Integrator_full_Reconstruction}.
In the figures, the black lines indicate the edges of the reconstructed polytope, while the white lines indicate the intersections of the state-constraint hyperplanes.

\begin{figure}[htbp]
    \centering
    \begin{subfigure}{0.79\textwidth}
        \centering
        \includegraphics[width=\linewidth]{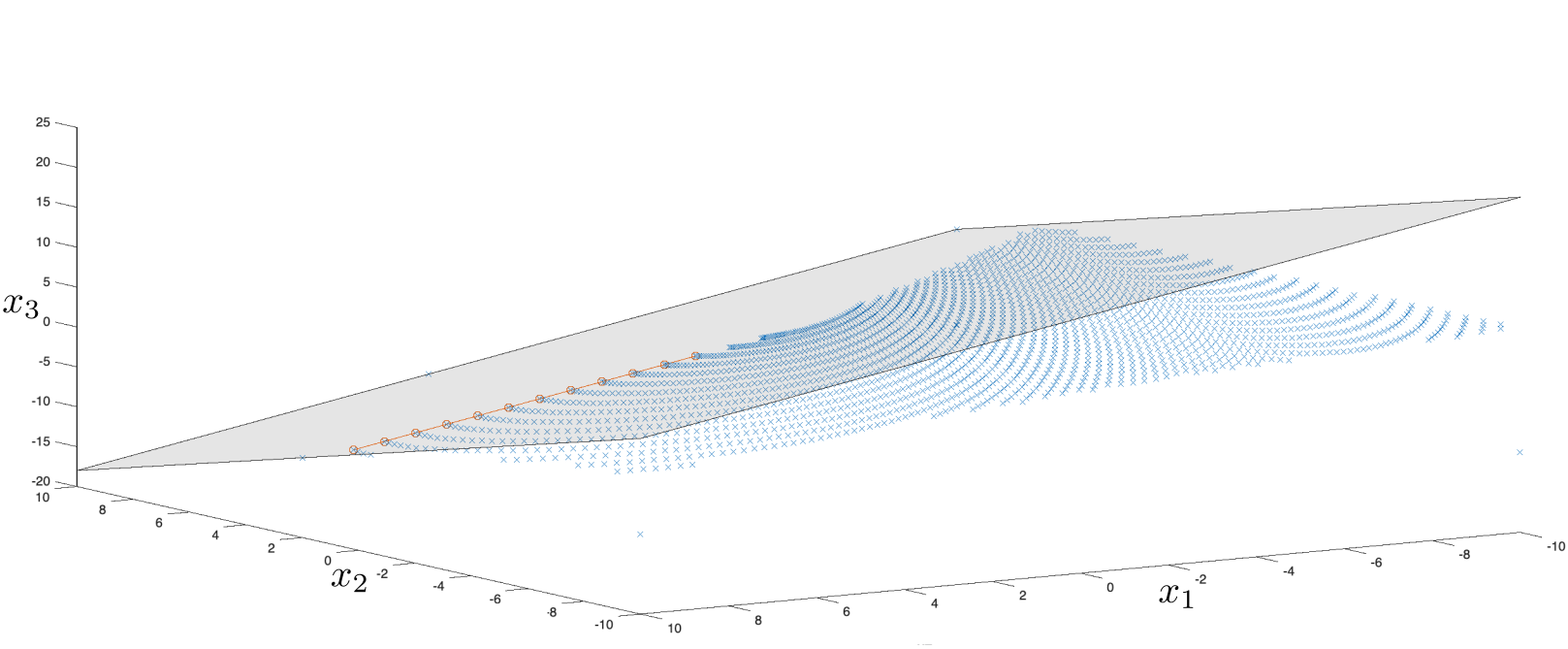}
        \caption{Boundary samples of admissible set for state constraint $H_1 x + h_1 \leq 0$ generated by Algorithm~\ref{alg:Structured_Barrier_Sampling}.}
    \end{subfigure}
    \\
    \begin{subfigure}{0.79\textwidth}
        \centering
        \includegraphics[width=\linewidth]{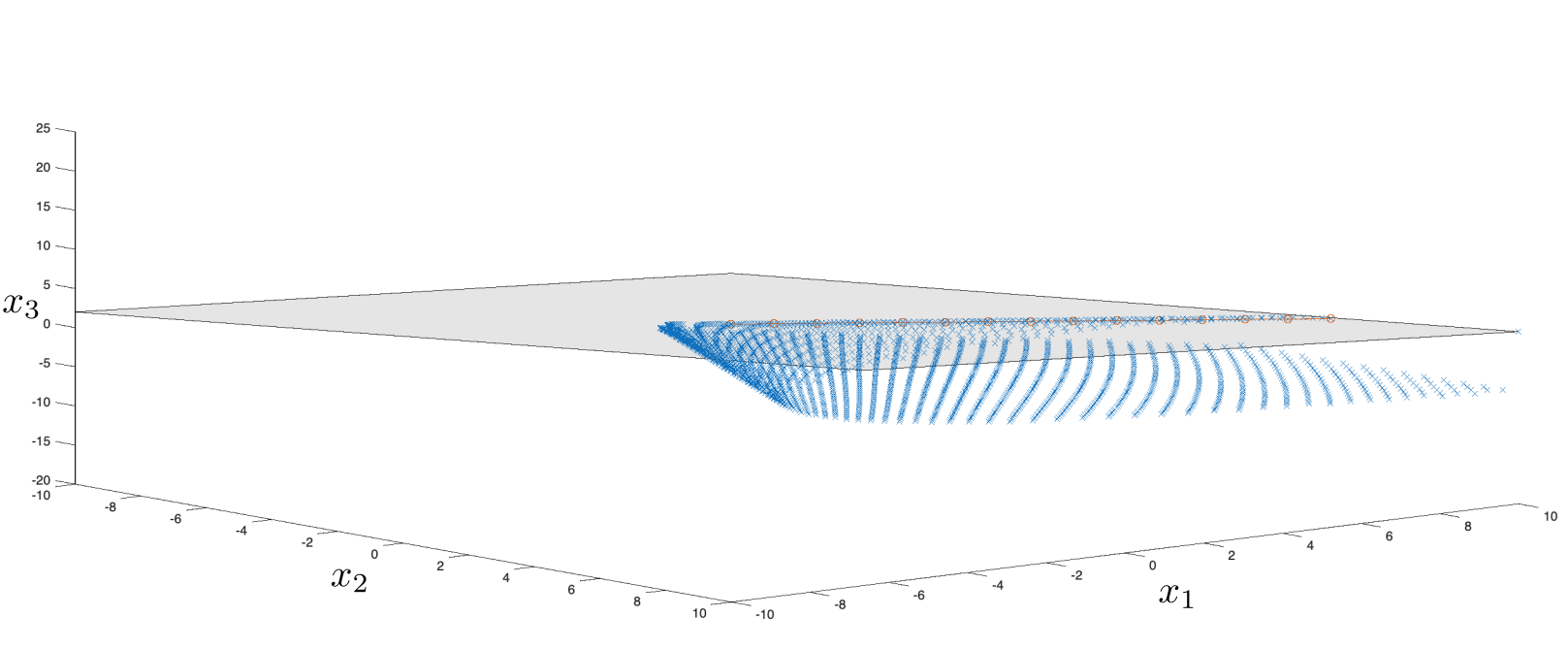}
        \caption{Boundary samples of admissible set for state constraint $H_2 x + h_2 \leq 0$ generated by Algorithm~\ref{alg:Structured_Barrier_Sampling}.}
    \end{subfigure}
    \hfill
    \begin{subfigure}{0.79\textwidth}
        \centering
        \includegraphics[width=\linewidth]{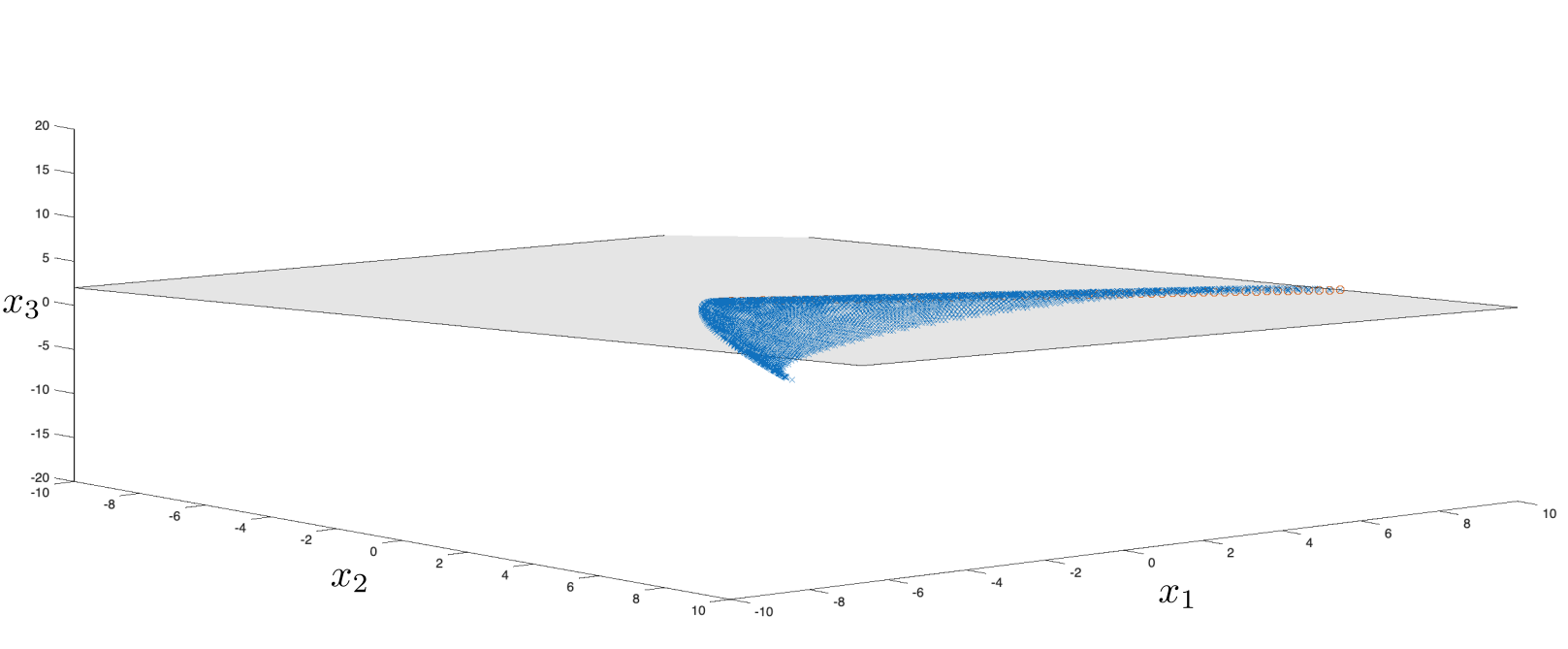}
        \caption{Boundary samples of admissible set for state constraint $H_2 x + h_2 \leq 0$ generated with the barrier-theoretic framework \cite{Levine_2013}.}
    \end{subfigure}
\caption{Boundary samples of the admissible set for the triple integrator \eqref{eq:Triple_Integrator}. Boundary samples are shown in blue, ultimate tangentiality points in red, and state constraints in gray.}
    \label{fig:Triple_Integrator_Samples}
\end{figure}

\begin{figure}[htbp]
    \centering
    \begin{subfigure}{0.79\textwidth}
        \centering
        \includegraphics[width=\linewidth]{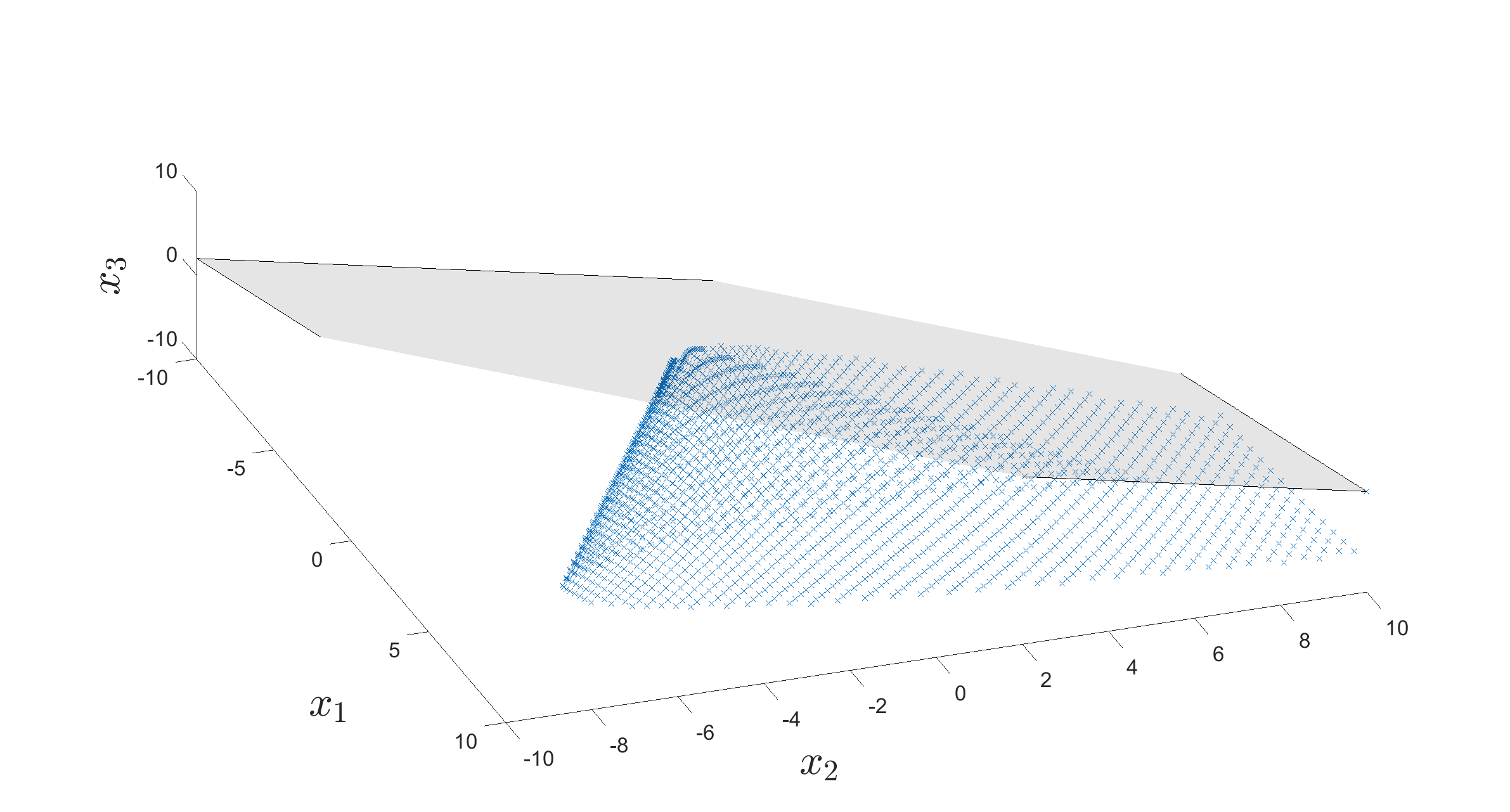}
        \caption{Zoomed view of the boundary samples for state constraint $H_2x+h_2\leq0$ generated by Algorithm~\ref{alg:Structured_Barrier_Sampling}.}
    \end{subfigure}
    \\
    \begin{subfigure}{0.79\textwidth}
        \centering
        \includegraphics[width=\linewidth]{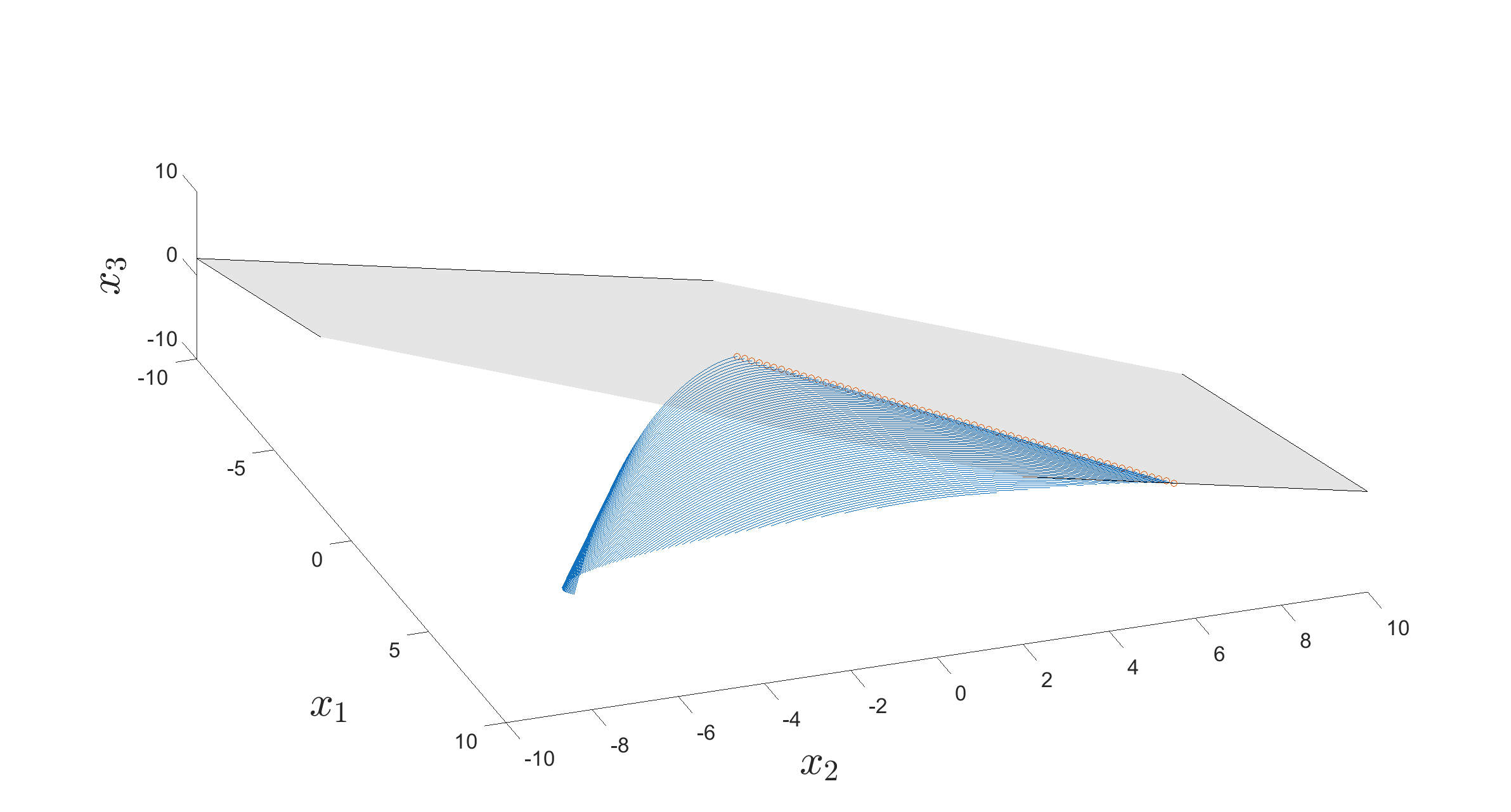}
        \caption{Zoomed view of the boundary samples for state constraint $H_2x+h_2\leq0$ generated with the barrier-theoretic framework \cite{Levine_2013}.}
    \end{subfigure}
\caption{Zoomed comparison of the boundary samples for the triple integrator \eqref{eq:Triple_Integrator}. Boundary samples are shown in blue, ultimate tangentiality points in red, and state constraints in gray.}
    \label{fig:Triple_Integrator_zoomed}
\end{figure}

\begin{figure}[htbp]
    \centering
    \begin{subfigure}{0.95\textwidth}
        \centering
        \includegraphics[width=\linewidth]{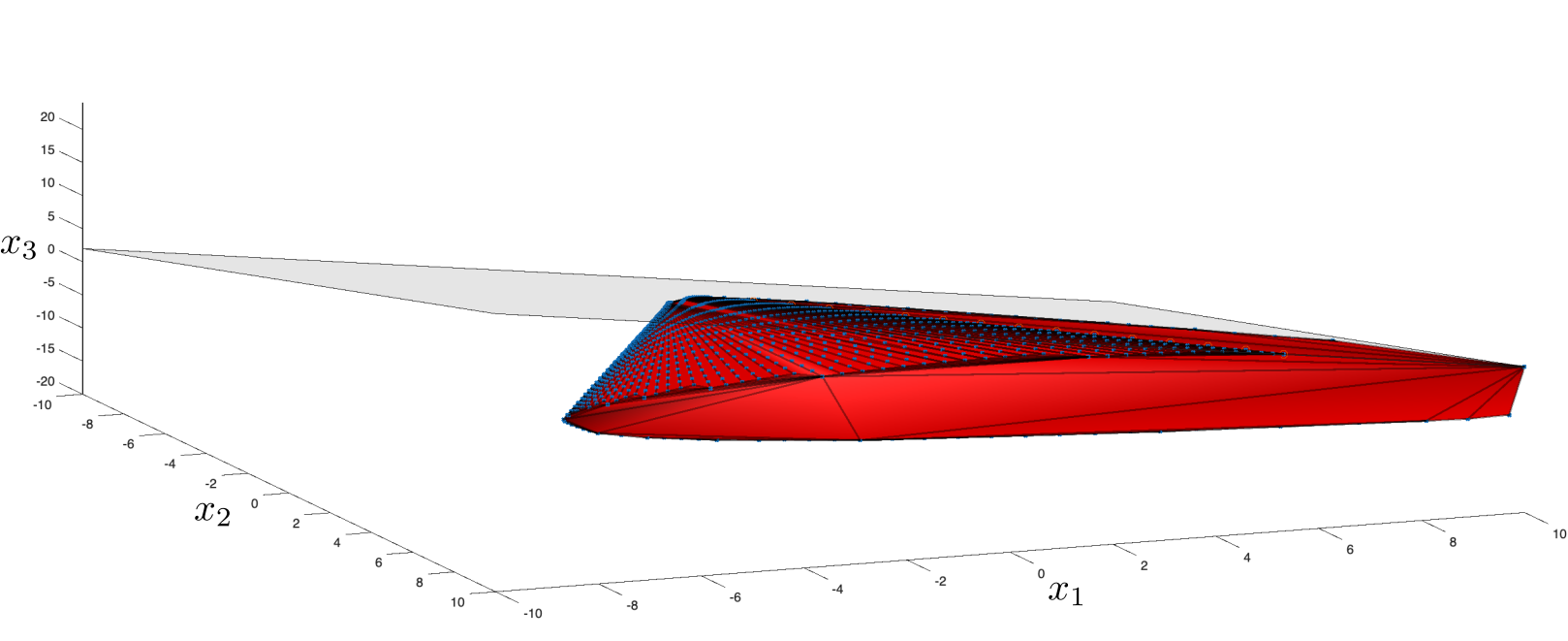}
        \caption{Polytopic inner approximation of admissible set for constraint $H_2 x + h_2 \leq 0$ in red. Boundary samples are shown in blue, state constraint in gray.}
    \end{subfigure}
    \hfill
    \begin{subfigure}{0.95\textwidth}
        \centering
        \includegraphics[width=\linewidth]{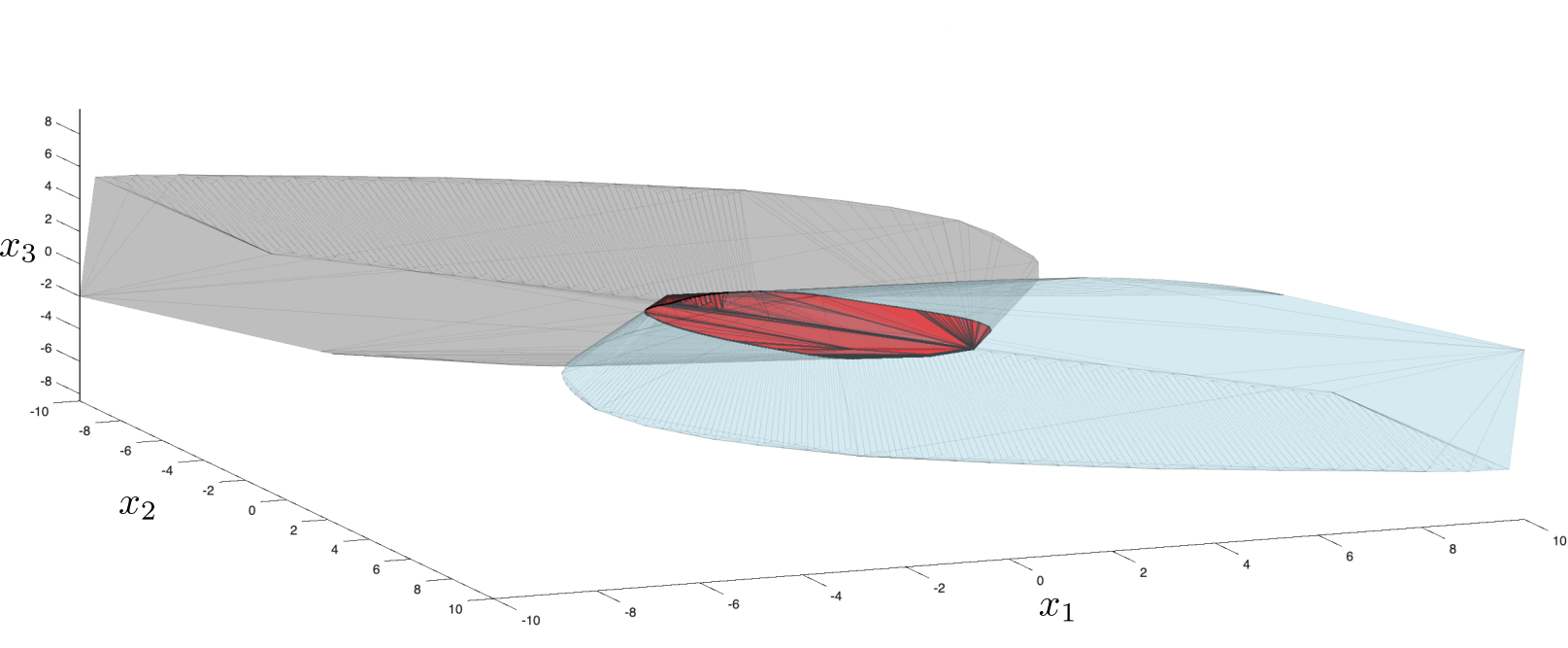}
        \caption{Polytopic inner approximations of the admissible sets associated with the state constraints $H_2x+h_2\leq0$ (light blue) and $H_4x+h_4\leq0$ (gray). Their intersection is shown in red.}
    \end{subfigure}
\caption{Polytopic inner approximations of the admissible set for the triple integrator \eqref{eq:Triple_Integrator}.}
    \label{fig:Triple_Integrator_Reconstruction}
\end{figure}
\begin{figure}[htbp]
    \centering
\includegraphics[width=\linewidth]{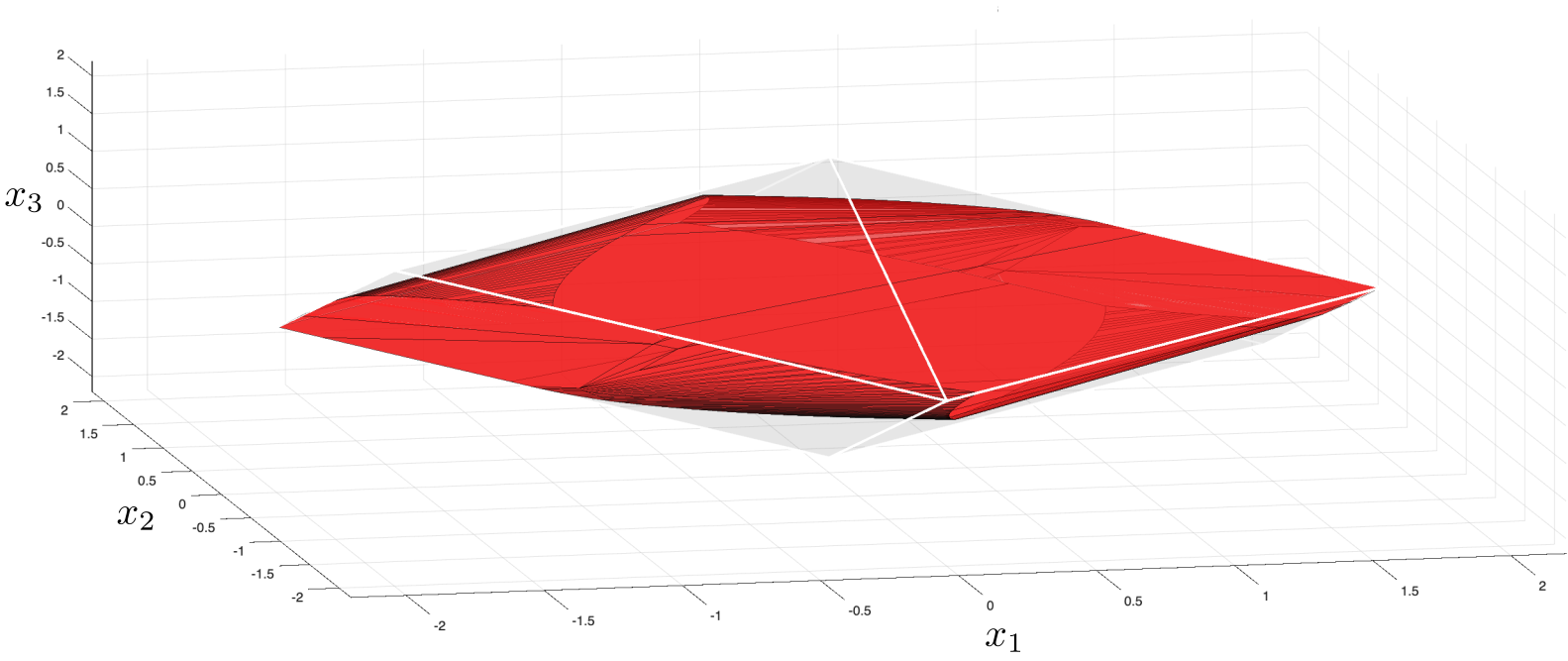}
        \caption{Polytopic inner approximation of admissible set subject to state constraints $Hx + h \leq 0$. Polytopic inner approximation in red, and state constraints in gray.}\label{fig:Triple_Integrator_full_Reconstruction}
\end{figure}

\subsection{Mass-Spring-Damper Chain}
The mass-spring-damper chain serves as a higher-dimensional benchmark for evaluating the runtime and scalability of the proposed algorithms.

Let $q_i$ denote the displacement of the $i$-th mass from its equilibrium position and $v_i = \dot q_i$ its velocity, $i = 1,\ldots,n$.
The dynamics of the mass--spring chain are described by the second-order system
\begin{align}\label{eq:MSD_Chain}
M_q \ddot{q}(t) + D_q \dot{q}(t) + K_q q(t) = B_q u(t) ,
\end{align}
where $q = (q_1,q_2,\ldots,q_n)^\top$ is the vector of displacements, $M_q \in \R^{n \times n}$ is the diagonal mass matrix, $D_q \in \R^{n \times n}$ the damping matrix, and $K_q \in \R^{n \times n}$ the stiffness matrix describing the coupling between adjacent masses.
The input $u(t) \in \R^m$ models an external force, whose effect on the system is captured by the input distribution matrix $B_q \in \R^{n \times m}$.

Assuming identical masses, spring constants, and damping coefficients, the matrices take the form
\begin{align*}
M_q &= m I_n, \; K_q = k \begin{pmatrix}
2 & -1 &  &  \\
-1 & 2 & \ddots & \\
 & \ddots & \ddots & -1 \\
 &  & -1 & 2
\end{pmatrix}, \; D_q = d \begin{pmatrix}
2 & -1 &  &  \\
-1 & 2 & \ddots & \\
 & \ddots & \ddots & -1 \\
 &  & -1 & 2
\end{pmatrix},
\end{align*}
where the omitted entries are zero. Introducing the state vector
\begin{align*}
x = \begin{pmatrix}
q \\
v
\end{pmatrix} \in \R^{2n} ,
\end{align*}
where
\begin{align*}
    q = \left(q_1,q_2,\ldots,q_n \right)^\top, \; v = \left(v_1,v_2,\ldots,v_n \right)^\top,
\end{align*}
the system can be written in first-order state-space form $\dot x = A x + B u$ with
\begin{align*}
A = \begin{pmatrix}
0 & I_n \\
-M_q^{-1}K_q & -M_q^{-1}D_q
\end{pmatrix},
\qquad
B = \begin{pmatrix}
0_n \\
M_q^{-1} B_q
\end{pmatrix}.
\end{align*}

Consider affine state constraints of the form $|q_1| \leq q_{\textnormal{max}}$, which can be written as
\begin{align*}
x_1 - q_{\max} &\leq 0 \\
-x_1 - q_{\max} &\leq 0
\end{align*}
and limit the displacements of the first mass.

In our simulations, we chose the parameters
\begin{align*}
    m = 1, \; k = 20, \; d = 0.05, \; q_{\textnormal{max}} = 1
\end{align*}
and use the input matrix $B = \left( 0, \ldots, 0, 1 \right)^\top$ with the input constraints $u(t) \in [-1,1]$.

We performed simulations for $n = 2,3,4,5$, corresponding to systems with $4$, $6$, $8$ and $10$ states, respectively.
The polytopic inner approximation was restricted to the region of interest $\mathcal{X}_{\textnormal{ROI}} = [-10,10]^{2n}$.
A time horizon of $4$s with sampling time of $0.2$s was used, resulting in $K = 20$ time steps.
The grid spacing was adjusted for each dimension to balance the number of generated boundary samples with the computational tractability of the subsequent polytope construction. The simulation results are summarized in Table~\ref{Tab:MSD_Chain}.

\begin{table}[h]
\caption{Simulation Results Mass-Spring-Damper Chain }\label{Tab:MSD_Chain}%
\begin{tabular}{@{}l|lllll@{}}
\toprule
Masses & $2$ & $3$ & $4$ & $4$ & $5$ \\
\midrule
Gird Spacing & $0.2$ & $2$ & $10$ & $4$ & $4$ \\
Samples & $138,902$ & $104,616$ & $1,656$ & $163,278$ & $4,369,962$ \\
Polytope Facets & $1,920$ & $230,842$ & $403,195$ & $-^{\ast}$ & $-^{\ast}$ \\
Reduced Facets & $1,420$ & $179,252$ & $296,069$ & $-^{\ast}$ & $-^{\ast}$ \\
Time for Sampling & $0.0577$s & $0.1351$s & $0.0321$s & $0.2074$s & $64.5713$s \\
QuickHull Time & $0.3323$s & $128.2692$s & $2,083.2$s & $-^{\ast}$ & $-^{\ast}$ \\
$\mathcal{H}$-reduction Time & $0.3042$s & $2,853.6$s & $12,906.0$s & $-^{\ast}$ & $-^{\ast}$ \\
Total Time & $0.6942$s & $2,982.0$s & $14,989.2$s & $-^{\ast}$ & $-^{\ast}$ \\
\bottomrule
\end{tabular}
\footnotetext{$\ast$: Computation was terminated after 8 hours without completion of the convex hull construction.}
\end{table}

The comparatively large reduction in the number of facets for the lower-dimensional examples is mainly due to the finer sampling grid.
Since the grid spacing was changed to maintain a comparable number of boundary samples across all dimensions, many neighboring samples generated almost identical supporting hyperplanes.
As a result, a large fraction of the inequalities produced by the convex hull construction were identified as redundant.

For the $8$-dimensional system, an additional pre-processing step was required before applying the redundancy-removal algorithm.
Specifically, duplicate facets were removed by discarding inequalities whose coefficient vectors and offsets were equal to an already existing facet up to the eighth decimal place.
Without this pre-processing, the redundancy-removal algorithm of \cite{Klintberg_2018} failed because of numerically indistinguishable inequalities.

It is worth noting that the increase in the total computation time is almost entirely caused by the convex hull construction and the redundancy-removal procedure.
In contrast, the proposed structured barrier sampling algorithm requires less than $0.5$s even for the $8$-dimensional example, demonstrating that the sampling procedure itself scales well with the problem dimension. Indicating that for higher-dimensional examples an approximation of the convex hull might be required for computational feasibility.

\begin{figure}[htbp]
    \centering
    \includegraphics[width=\linewidth]{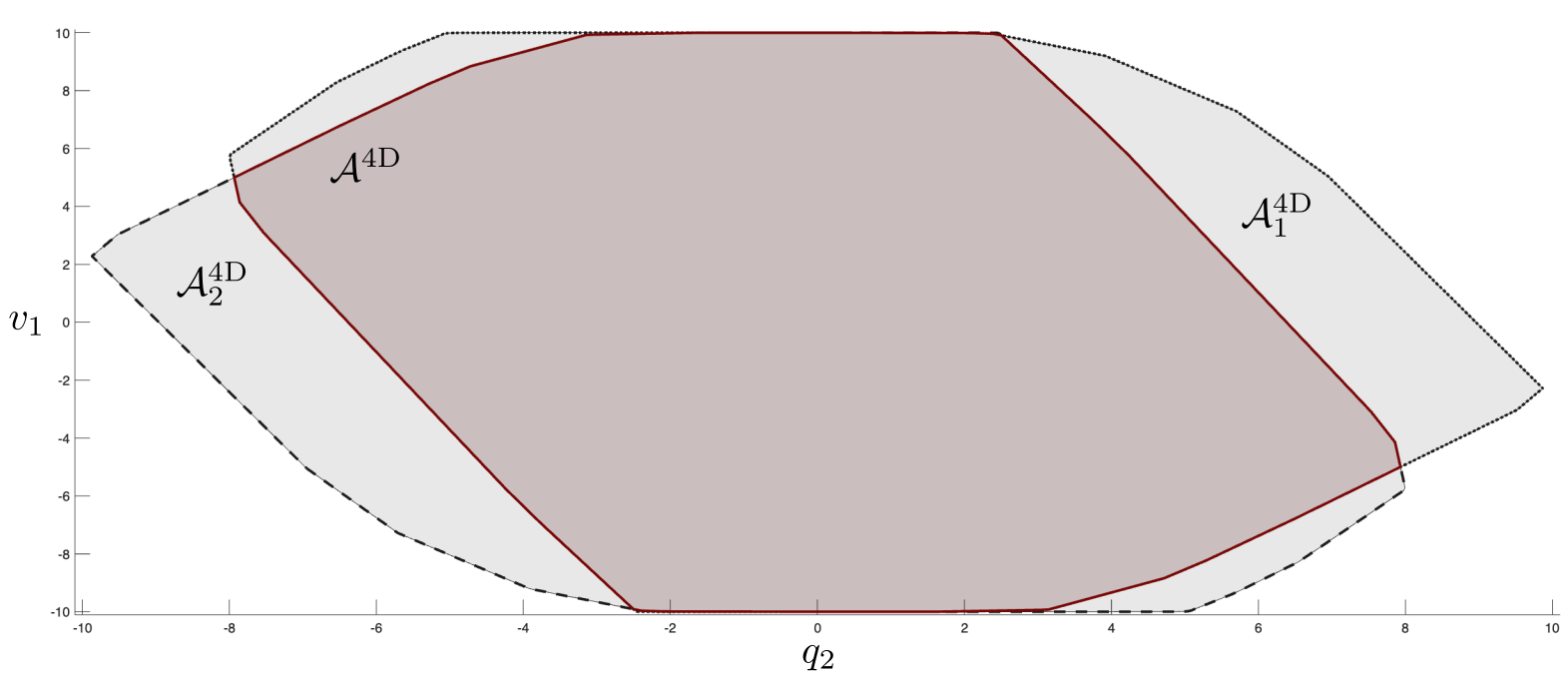}
    \caption{Slices through the 4D polytopic inner approximation of the admissible set for the mass-spring-damper chain~\eqref{eq:MSD_Chain} with two masses. The slices are obtained by setting all states to zero except for $q_2$ and $v_1$. The gray regions show the admissible sets $\mathcal{A}^{\mathrm{4D}}_1$ and $\mathcal{A}^{\mathrm{4D}}_2$ associated with the state constraints $x_1-q_{\max}\leq 0$ and $-x_1-q_{\max}\leq 0$, respectively. Their intersection, $\mathcal{A}^{\mathrm{4D}}=\mathcal{A}^{\mathrm{4D}}_1\cap\mathcal{A}^{\mathrm{4D}}_2$, is shown in red.}
    \label{fig:4D_MSD_chain}
\end{figure}
\begin{figure}[htbp]
    \centering
    \begin{subfigure}{0.95\textwidth}
    \centering
    \includegraphics[width=\linewidth]{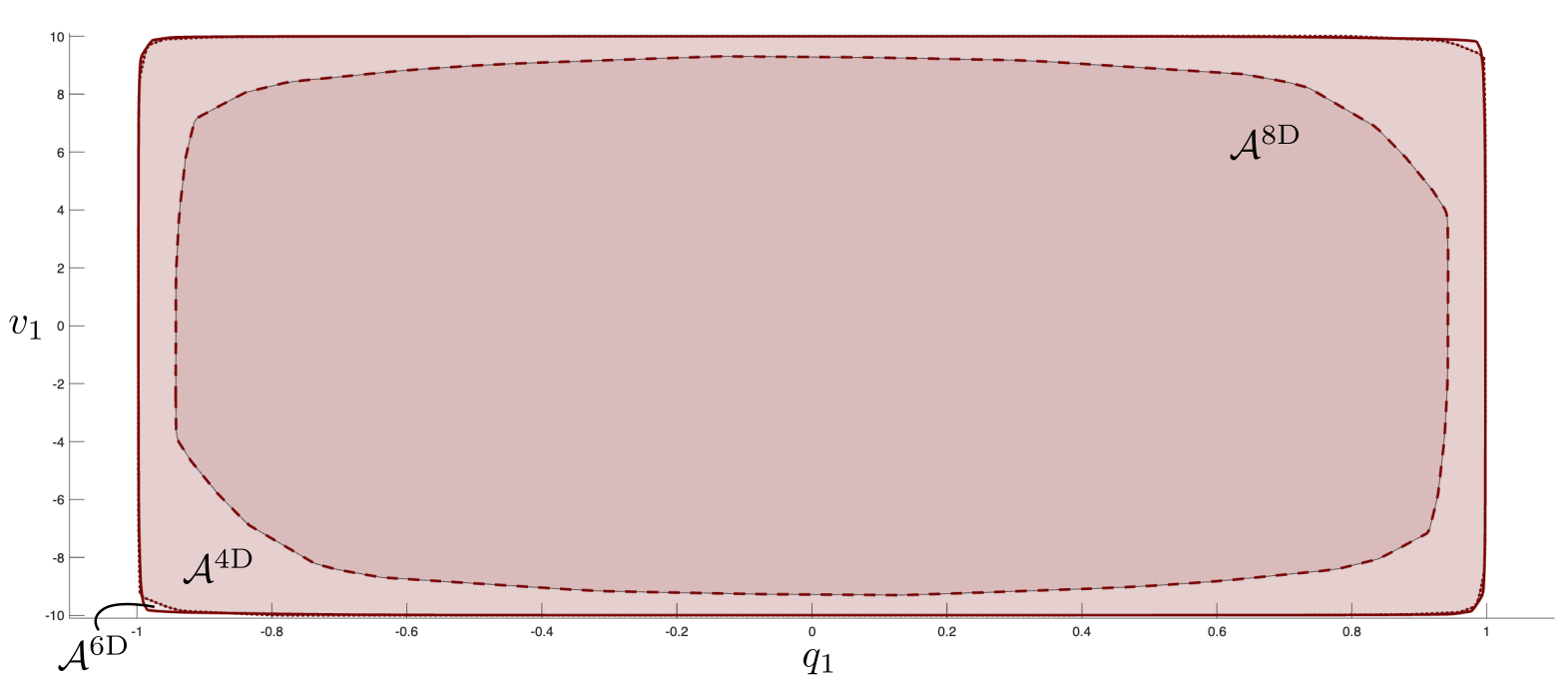}
    \caption{Slices were obtained by setting every state to zero except for $q_1$ and $v_1$.}
    \end{subfigure}
    \hfill
    \begin{subfigure}{0.95\textwidth}
    \centering
    \includegraphics[width=\linewidth]{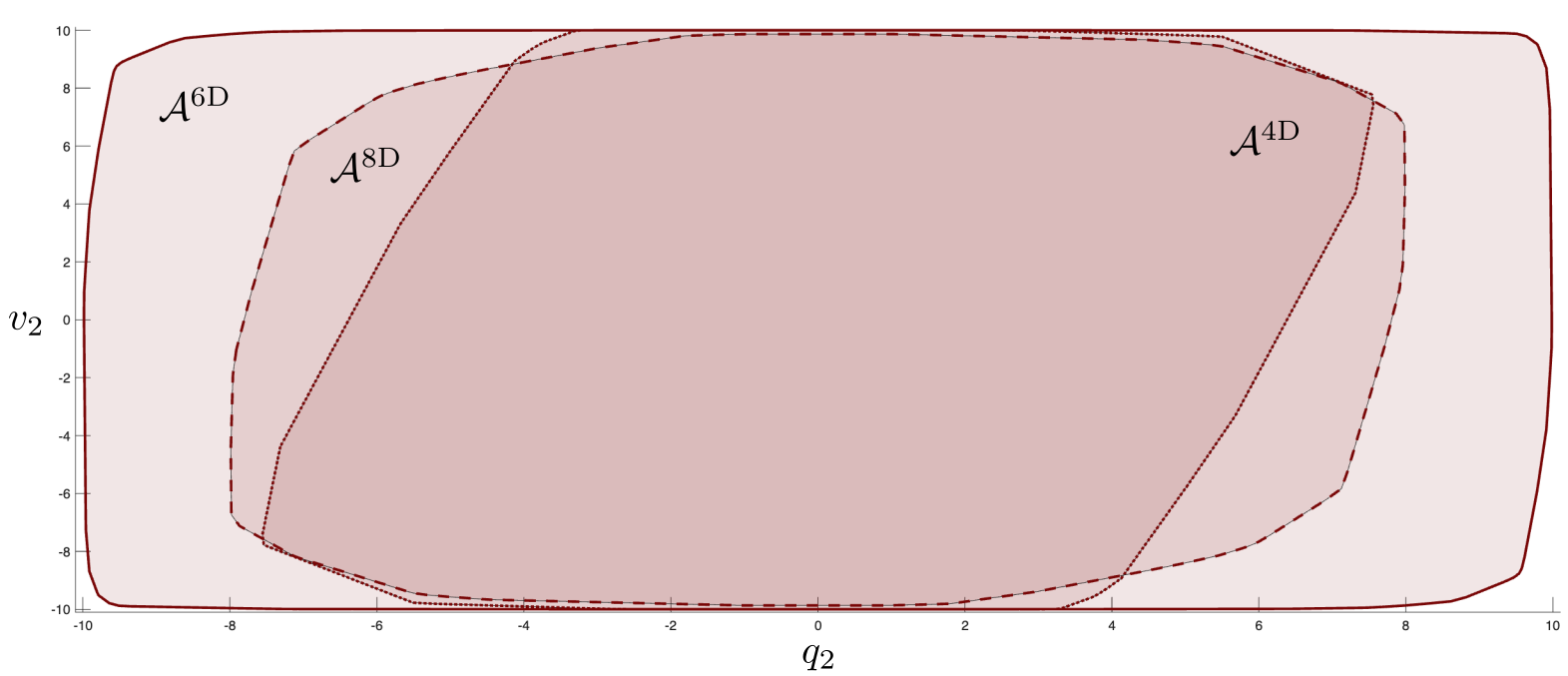}
    \caption{Slices were obtained by setting every state to zero except for $q_2$ and $v_2$.}
    \end{subfigure}
    \caption{Slices through the polytopic inner approximation of the admissible set for the mass-spring-damper chain \eqref{eq:MSD_Chain}. $\mathcal{A}^{\mathrm{4D}}, \mathcal{A}^{\mathrm{6D}}, \mathcal{A}^{\mathrm{8D}}$ correspond to the slices of the admissible set for two, three, and four masses, respectively.
    }
    \label{fig:MSD_chain}
\end{figure}

In addition to the results reported in Table~\ref{Tab:MSD_Chain}, Figs.~\ref{fig:4D_MSD_chain} and \ref{fig:MSD_chain} illustrate the resulting polytopic inner approximations and the effect of the proposed decomposition with respect to the individual state constraints.
The figures show two-dimensional sections obtained by varying two state variables while setting the remaining variables to zero, and therefore do not capture the full state-space interactions.

Fig.~\ref{fig:4D_MSD_chain} shows the proposed decomposition of the admissible set with respect to the individual state constraints for the 4D mass-spring-damper chain~\eqref{eq:MSD_Chain}.
The gray regions correspond to the admissible sets $\mathcal{A}^{\mathrm{4D}}_1$ and $\mathcal{A}^{\mathrm{4D}}_2$ associated with the constraints $x_1-q_{\max} \leq 0$ and $-x_1-q_{\max}\leq 0$ respectively, while the red region represents the final admissible set $\mathcal{A}^{\mathrm{4D}}=\mathcal{A}^{\mathrm{4D}}_1\cap\mathcal{A}^{\mathrm{4D}}_2$. 
This decomposition avoids the explicit treatment of simultaneous active constraints by first constructing the admissible sets associated with the individual state constraints and subsequently intersecting their inner polytopic approximations.

Fig.~\ref{fig:MSD_chain}(a) shows slices through the admissible sets obtained by varying only the states $q_1$ and $v_1$, while all remaining states are fixed to zero.
As the number of masses is increased to $4$, the admissible region decreases substantially, which is reflected by the increasing geometric complexity of the polytopic inner approximations resulting in a growing number of facets reported in Table~\ref{Tab:MSD_Chain}.

The slices in Fig.~\ref{fig:MSD_chain}(b), obtained by varying $q_2$ and $v_2$, illustrate that the evolution of the admissible set with increasing system dimension.
Although the admissible set for the six-dimensional system almost covers the entire displayed region, the four- and eight-dimensional admissible sets have noticeably different geometries.
In particular, neither of the latter is contained in the other, demonstrating that the influence of additional masses on the admissible set is not simply characterized by a monotonic expansion or contraction.

\section{Conclusion}\label{Sec:Conclusion}
This paper proposed a computationally efficient construction of inner polytopic approximations of admissible sets for linear control systems subject to affine state constraints.
Building upon the barrier-theoretic characterization of admissible sets in \cite{Levine_2013}, an exact sampling approach was developed to generate boundary samples directly.
The proposed method propagates sets of barrier trajectories.

The proposed framework further decomposes the admissible set computation into individual state constraints, allowing the corresponding inner polytopic approximations to be constructed separately and subsequently combined by intersection.
This avoids the explicit treatment of simultaneous barrier intersections while preserving the barrier-theoretic characterization of the admissible set.
The boundary samples are converted into half-space representations using the QuickHull algorithm, and runtime complexity analyses were derived for both the structured sampling and polytope reconstruction algorithms.
While the sampling procedure exhibits polynomial complexity in the state dimension and the number of time steps, the computational cost of the polytope reconstruction is dominated by the convex hull computation and therefore depends strongly on both the number of samples and the state-space dimension.

The proposed method was demonstrated on two examples. 
For a triple integrator subject to six affine state constraints, the structured sampling procedure and the construction of the corresponding inner polytopic approximation were illustrated in detail. 
A higher-dimensional mass-spring-damper chain was then used to demonstrate the scalability of the proposed algorithms.

Possible future applications include the extension of the proposed framework to nonlinear model predictive control. 
In particular, the computational efficiency of the structured sampling algorithm may enable the online computation of admissible set approximations based on local linearizations of nonlinear systems. 
Important questions in this context include quantifying the approximation quality, identifying conditions that guarantee the preservation of the inner approximation property, and determining the time horizons over which admissible sets computed from local linearizations remain valid for the underlying nonlinear dynamics.

\section*{Funding}
This work was supported by the Deutsche Forschungsgemeinschaft (DFG, German Research Foundation) - Project Number 531896505.

\section*{Data Availability}
No datasets were generated or analyzed during the current study.

%

\bibliographystyle{elsarticle-num} 
\bibliography{bibliography.bib}

\end{document}